\documentclass[12pt,a4paper]{article}
\usepackage[utf8]{inputenc}
\usepackage[english]{babel}
\usepackage{amsmath}
\usepackage{etoolbox}
\usepackage{dynkin-diagrams}
\usepackage{amsfonts}
\usepackage{amssymb}
\usepackage{amsthm}
\usepackage{makeidx}
\usepackage{graphicx}
\usepackage[all]{xy}
\usepackage{tikz}

\usepackage[left=2.5cm,right=2.5cm,top=2cm,bottom=2cm]{geometry}

\newtheorem{Theorem}{Theorem}[section]
\newtheorem{Lemma}[Theorem]{Lemma}
\newtheorem{definition}[Theorem]{Definition}

\newtheorem{Proposition}[Theorem]{Proposition}

\newtheorem{Remark}[Theorem]{Remark}

\begin{document}
\title{A Cryptosystem Using Cluster Algebras}

\author{ Martin Ortiz Morales and Leticia Peña Tellez}
\date{May 11, 2026}
\maketitle

\begin{abstract}
We establish an algorithm to encrypt and decrypt messages, where messages can be seen as elements of a finite field, using of mutations in a cluster algebra finite type.
\end{abstract}

\section*{Introduction}



Cluster algebras were introduced by Fomin and Zelevinsky in 2002 as a new class of commutative algebras designed to study phenomena arising in Lie theory, especially total positivity and canonical bases \cite{fomin1}. Since their introduction, cluster algebras have developed into a broad and highly active research area connecting algebra, geometry, combinatorics, topology, representation theory, and mathematical physics.

Roughly speaking, a cluster algebra is generated from an initial set of variables, called a \emph{cluster}, together with an exchange matrix or quiver encoding combinatorial data. Through an iterative process known as \emph{mutation}, one obtains new clusters and new generators called \emph{cluster variables}. Despite the apparent complexity of repeated mutations, cluster algebras exhibit remarkable structural properties such as the Laurent phenomenon, positivity conjectures (now proved in many cases), and deep symmetries among seeds and exchange patterns.

One of the most important reasons for the success of cluster algebras is their universality. They appear naturally in the coordinate rings of algebraic varieties such as Grassmannians, flag varieties, and double Bruhat cells. In geometry, they play a central role in Teichm\"uller theory and higher Teichm\"uller spaces, where cluster coordinates provide canonical parametrizations of moduli spaces \cite{FG2006}. In representation theory, cluster categories and quiver representations provide categorifications of cluster structures, leading to powerful methods for understanding canonical bases and derived categories.

Recent years have seen substantial progress in the interaction between cluster algebras and mathematical physics. In scattering amplitudes, cluster structures govern symbol alphabets and adjacency relations in planar $\mathcal{N}=4$ supersymmetric Yang--Mills theory, revealing hidden combinatorial constraints in Feynman integrals and amplitude calculations \cite{Chicherin2020}. Likewise, cluster methods have become relevant in integrable systems, where mutations correspond to discrete dynamical transformations and periodicity phenomena \cite{GekhtmanIzosimov2024}.

Another modern direction concerns Poisson geometry and quantum cluster algebras. Recent work has studied symplectic leaves, Azumaya loci, and root-of-unity quantizations of upper cluster algebras, showing that cluster structures remain meaningful far beyond the commutative setting \cite{BrownYakimov2024}. These developments suggest that cluster algebras provide a unifying framework for classical and noncommutative geometry.

Cluster algebras have also found applications in topology and low-dimensional geometry. Connections with knot invariants, braid varieties, and symplectic topology continue to expand, with recent surveys highlighting interactions between cluster mutations and geometric structures on surfaces and braid moduli spaces \cite{Oberwolfach2024}.

Because of their rich combinatorial definition and unexpectedly wide range of applications, cluster algebras are now regarded as one of the central unifying theories in contemporary algebra. Their ongoing development continues to generate new tools and perspectives across mathematics and theoretical physics.

\def\row#1/#2!{#1_{\IfStrEq{#2}{}{n}{#2}} & \dynkin{#1}{#2}\\}
\newcommand{\tble}[1]{
  \renewcommand*\do[1]{\row##1!}
  \[
  \begin{array}{ll}\docsvlist{#1}\end{array}
  \]
}

In this work we establish an algorithm to encrypt and decrypt messages, where messages can be seen as elements of a finite field, by using  mutations in a cluster algebra of finite type. Suppose that two entities $A$ and $B$ want exchange information of sure way, we want to ensure that if any entity other than A and B intercepts the information for the purpose of knowing its content, this will be a difficult or impossible task for him.The goal of a cryptosystem is will do that a message $m$ is a message illegible for its transmission, let say  $c$ (encrypt) that is the message which sending $A$ to $B$. When $B$ receives $c$, $B$ has to know as recuperate $m$ (decrypt). We briefly describe here what the process is, which will be shown in detail later.

We will see that the message $m$ can be seeing as combination lineal of cluster variables and as part of a cluster as follows. Suppose that $m$ is a letter belonging to an alphabet of size $N$, 
then a  number prime $p$ is chosen and a suitable number $r\in \mathbb{N}$ so $q=p^r$ is close to $N$,  thus  $m$ can be seen as an element other than zero in a finite field $\mathbb{F}_{q}$.  Note that  $\mathbb{F}_{q}=\mathbb{Z}_p[x]/\langle f(x) \rangle$, where $q=p^r,$  and $r$ is  degree of an irreducible polynomial $f(x)$ in $\mathbb{Z}_p$, and $\mathbb{F}_{q}$ is an extension field of $\mathbb{Z}_p$ of degree $r$. Let $\alpha \in \mathbb{F}_q$ a root of $f(x)$ in $\mathbb{F}_{q}$, if $\beta \in \mathbb{F}_q$, $\beta$  can  be represented as $$\beta = a_0 + a_1 \alpha + \cdots + a_{r-1}\alpha^{r-1}, a_i \in \mathbb{Z}_p.$$

Also $\lbrace 1,\alpha,\ldots, \alpha^{r-1} \rbrace$ is a base for $\mathbb{F}_q$, seen as a vector space  over  $\mathbb{Z}_p$. The isomorphism
\begin{equation}
\begin{array}{rlcl}
\varphi: & \mathbb{F}_q & \rightarrow & \mathbb{Z}^r_p \\
    & \beta & \rightarrow & (a_0,a_1,\ldots, a_{r-1})
\end{array},
\label{eq3}
\end{equation} give a representation of the elements of finite field $\mathbb{F}_q$, which we use in the cryptosystem  that is  explained in this work.

On the other hand, we denote by $\Gamma_r$ the quiver corresponding to a Dynkin diagram $\Gamma$ with $r$ vertices,  and by  $\mathcal{A}_{r}$  the cluster algebra associated to the Dynkin diagram corresponding to the quiver $\Gamma_r$.

For the cluster algebra  $\mathcal{A}_r$ we consider as initial seed the cluster $\tilde{x}=\lbrace x_0,x_1,\ldots,x_{r-1} \rbrace$ and as exchange matrix $\tilde{B}$ the adjacency matrix that represent  the quiver $\Gamma_r$ which is obtained from the Dynkin diagram $\Gamma,$  where the vertices of $\Gamma_r$ are labelled by $x_0,x_1,\ldots,x_{r-1}$. Now, we can associate each cluster variable  $x_i$ of the initial cluster $\tilde{x}$ to the element $\alpha^i \in \mathbb{F}_q$ by the bijection $\alpha_i\mapsto x_i$,  according  Proposition \ref{bijection}.
Thus,  $m=(a_0,a_1,\ldots,a_{r-1}) \in \mathbb{Z}_p^r$, also seen as element in $\mathbb{F}_q$, $m= a_0\alpha^0+a_1\alpha^1 + \cdots + a_{r-1}\alpha^{r-1}$, can be seen as a linear combination of cluster variables $m= a_0x_0+a_1x_1 + \cdots + a_{r-1}x_{r-1}$.

Finally, to encrypt the message $m$, we use a key encryption, $k$, which is a key private that is only its know the identities $A$ and $B$, and the same key help us to decipher the message.

The key encryption
\begin{equation}
k=\{k_0,k_1,\ldots,k_t \},
\label{eq6}
\end{equation}
is an integers sequence such that $0 \leq k_i \leq r-1$. The integers $k_1,\ldots, k_{t}$ represent a sequence of mutations in the cluster algebra $\mathcal{A}_r$.  The idea that we show is that we  can "keep" the message $m$ in a quiver $\Gamma_r'$ and "hide" it using mutations. The quiver $\Gamma'_r$ is constructed the same form that $\Gamma_r$, except in the labelled of the vertex $x_{k_0}$, we write $m=a_0x_0+a_1x_1 + \cdots + a_{r-1}x_{r-1}$ instead  $x_0$.

The encrypt message $\tilde{c}$, is obtained as part of the seed:
$$(\tilde{c},\tilde{B'})=\mu_{k_{t}}(\mu_{k_{t-1}}(\cdots \mu_{k_2}(\mu_{k_1}(\tilde{x},\tilde{B})))),$$ and subsequently replacing each cluster variable $x_{k_0}$ by $m= a_0x_0+a_1x_1 + \cdots + a_{r-1}x_{r-1}$. Thus, the seed $(\tilde{c},\tilde{B'})$ is send to $B$, where $\mu_{k_i}$, $1\le i\le t$ are mutations.

This work consists of four sections. In Section 1, we provide the necessary preliminaries to familiarize the reader with the terminology used throughout. We offer a brief introduction to cluster algebras, covering root systems and Dynkin diagrams. In Section 2, we explain how the proposed cryptsystem works. Subsequently, in Section 3, we provide two concrete examples of how a message can be encrypted and decrypted using the proposed method, and how the process can be visualized with the help of the open-source computer
algebra system \verb"sage" \cite{Sage}. Finally, in Section 4, we discuss the efficiency of the proposed method.

\section{Preliminaries}

\subsection{Root systems}

Following \cite{algebralie}, we present some important results  that we  use  throughout this in this paper.

Let $E$ be a finite dimensional real vector space endowed with an inner product written $(-,-)$. Given a non zero vector $v \in E$, let $s_v$ be the reflection 
in the hyperplane normal to $v$, $H_v$.

\begin{figure}[h!]
\centering
\begin{tikzpicture}[scale=2]
  \draw[fill=gray!10, thick]
    (-1,-0.3) -- (1,-0.3) -- (1.3,0.3) -- (-0.7,0.3) -- cycle;
  \node at (0,0.05) {$\ \ \ \ \ \ \ H_v$};

  \draw[->,thick] (0,0.05) -- (0,1) node[above] {$v$};

  \draw[dashed,->,thick] (0,0.05) -- (0,-0.9) node[below] {$S_v(v)$};
\end{tikzpicture}
\caption{The reflexión $s_v$ in the  hyperplane  \(H_v\).}
\label{fig:reflexion}
\end{figure}

The reflection $s_v$ has the following properties:
\begin{itemize}
\item [(i)] $s_v(v)=-v$
\item  [(ii)] $s_v$ fixes all elements $y$ such that $y \in H_v$.
\item [(iii)] For all $x \in E$, the reflection $s_v$ of $x$ can calculate as
\begin{equation}
s_v(x)=x-\frac{2(x,v)}{(v,v)}.
\label{eq31}
\end{equation}
\item [(iv)] $s_v$ preserves the inner product, that is,
$$(s_v(x),s_v(y))=(x,y) \text{ para toda } x,y \in E.$$
\item [(v)] As it is a very useful convention, we shall write \begin{equation}
\langle x,v \rangle:=\frac{2(x,v)}{(v,v)},
\label{eq32}
\end{equation}
the symbol $\langle x,v \rangle$ is only lineal with respect to its first variable.
\end{itemize}

\begin{definition}\label{rootsystem}
A subset $R$ of a real inner product space $E$ is a \textit{root system} if it satisfies the following axioms.
\begin{description}
\item[(R1)] $R$ is finite, it spans $E$, and it does not contain 0.
\item[(R2)] If $\alpha \in R$, then only scalar multiples of $\alpha$ in $R$ are $\pm \alpha$.
\item[(R3)] If $\alpha \in R$, then the reflection $s_\alpha$ permutes the elements of $R$.
\item[(R4)] If $\alpha, \beta \in R$, then $\langle \beta, \alpha \rangle \in \mathbb{Z}$.
\end{description}

The elements of $R$ are called \textit{roots}.
\end{definition}

\begin{definition}
A subset $B$ of $R$ is a \textit{base} for the root system $R$, if
\begin{description}
\item [(B1)] $B$ is a vector space basis for $E$, and
\item [(B2)] every $\beta \in R$ can be written as $\beta = \sum_{\alpha \in B} k_{\alpha}\alpha$, with $k_{\alpha} \in \mathbb{Z}$, where all the non zero coefficients $k_{\alpha}$ have the same sign.
\end{description}
\end{definition}

In \cite{algebralie} it is proved that all root systems have a base. Since for each $\alpha \in R$, the reflection $s_{\alpha}$ which acts as an invertible linear map on $E$,  we may therefore consider the group of invertible linear transformations of $E$ generated by the reflections $s_{\alpha}$ for $\alpha \in R$. This is known as the \textit{Weyl group} of $R$ denoted by $W$ or $W(R)$. 

By  (R1) in  Definition \ref{rootsystem}, $R$ is finite and by (R3) the Weyl group $W$, associated with $R$ is finite. In \cite{algebralie}, it is proved that if $W_0:= \langle s_{\gamma}: \gamma \in B \rangle$, where $B$ is a base for a root system $R$, then for the Weyl group associated with $R$, $W$, is such that $W=W_0$.

We consider $B$ a base for root system $R$ and we stablish an order on elements of $B$, say $\lbrace \alpha_1 , \ldots , \alpha_l\rbrace$.

\begin{definition}
The \textit{Cartan matrix} of $R$ is defined to be the $l \times l$ matrix with $ij$-th entry $\langle \alpha_i , \alpha_j \rangle$.
\end{definition}

The Cartan matrix depends only on the ordering adopted with our chosen base $B$ and not on the base itself.

\begin{definition}
If $B=\left( b_{ij}\right)$ is a square matrix $n \times n$ with integer entries, its counterpart of Cartan is a square matrix $n \times n$, $A=(a_{ij})$ where
\begin{equation*}
a_{ij}=\left\{
\begin{array}{ll}
2 & \text{if } i=j,\\
-\vert b_{ij}\vert & \text{if } i\neq j.
\end{array} \right.
\end{equation*}
\end{definition}

In \cite{algebralie} is proved the following lemma. This lemma give all the possibilities for $\langle \alpha, \beta \rangle \langle \beta, \alpha \rangle$, which depend of the angle between the vectors $\alpha$ and $\beta$.

\begin{Lemma}[Finiteness Lemma]
Suppose that $R$ is a root system in the real inner product space $E$. Let $\alpha, \beta \in R$ with $\beta \neq \pm \alpha$, then
$$\langle \alpha, \beta \rangle \langle \beta, \alpha \rangle \in \lbrace 0,1,2,3 \rbrace.$$
\label{lem11}
\end{Lemma}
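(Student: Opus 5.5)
The plan is to compute the product $\langle \alpha,\beta\rangle\langle\beta,\alpha\rangle$ directly from the definition (\ref{eq32}) and compare it with the Cauchy--Schwarz inequality. Let $\theta$ be the angle between $\alpha$ and $\beta$, so that $(\alpha,\beta)=\|\alpha\|\,\|\beta\|\cos\theta$. Then
\[
\langle \alpha,\beta\rangle\langle\beta,\alpha\rangle=\frac{2(\alpha,\beta)}{(\beta,\beta)}\cdot\frac{2(\beta,\alpha)}{(\alpha,\alpha)}=\frac{4(\alpha,\beta)^2}{\|\alpha\|^2\|\beta\|^2}=4\cos^2\theta .
\]
Consequently the product always lies in the interval $[0,4]$.

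Next I show that the product is an integer. By axiom (R4), both $\langle\alpha,\beta\rangle$ and $\langle\beta,\alpha\rangle$ are integers, so their product $4\cos^2\theta$ is a non-negative integer at most $4$. This leaves the possibilities $\{0,1,2,3,4\}$.

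The remaining step is to exclude the value $4$, and it is the only place where the hypothesis $\beta\neq\pm\alpha$ enters. Suppose $4\cos^2\theta=4$. Then $\cos^2\theta=1$, which is exactly the equality case of Cauchy--Schwarz. Hence $\alpha$ and $\beta$ are linearly dependent, so $\beta=c\alpha$ for some nonzero real number $c$; here $\alpha,\beta\neq 0$ by (R1). Axiom (R2) then forces $c=\pm1$, that is, $\beta=\pm\alpha$, which contradicts the hypothesis.

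Therefore $\langle\alpha,\beta\rangle\langle\beta,\alpha\rangle\in\{0,1,2,3\}$. No step is a serious obstacle; the only point needing care is the appeal to the equality case of Cauchy--Schwarz combined with (R2).
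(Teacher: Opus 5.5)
Your proof is correct and is essentially the standard argument: the paper gives no proof of its own and defers to Erdmann--Wildon \cite{algebralie}. There the lemma is proved the same way, by writing the product as $4\cos^2\theta$, getting integrality from (R4), and excluding the value $4$ because $\cos^2\theta=1$ forces $\alpha,\beta$ to be linearly dependent. Your explicit appeal to (R1) and (R2) makes precise the step from linear dependence to $\beta=\pm\alpha$.
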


Another way to record the information given in the Cartan matrix is in a graph $\Gamma = \Gamma(R)$, defined as follows. The vertices of $\Gamma$ are labelled by the simple roots of $B$. Between the vertices labelled by simple roots $\alpha$ and $\beta$, we drawn $d_{\alpha\beta}$ lines, where
$$d_{\alpha\beta}:=\langle \alpha , \beta \rangle \langle \beta , \alpha \rangle \in \lbrace 0, 1, 2, 3 \rbrace.$$

If $d_{\alpha\beta}>1$, we draw an arrow pointing from the longer root to the shorter root. This graph is called the \textit{Dynkin diagram} of $R$. In \cite{algebralie}, is proved that the Dynkin diagram of $R$ is independent of the choice of base.

\begin{center}
\begin{figure}[hbtp]
\centering
\tble{A/{},B/{},C/{},D/{},E/6,E/7,E/8,F/4,G/2}
\caption{Dynkin diagrams}
\end{figure}

\end{center}


\subsection{Cluster algebras}

Cluster algebras were introduced by Sergey Fomin and Andrei Zelevinsky in 2002 (cf. \cite{fomin1}). Before giving a definition of cluster algebras, some definitions and previous results are necessary.

Fix $n,m \in \mathbb{N}$ such that $n\leq m$ and usually $I=\lbrace 1,2,\ldots,n \rbrace$. Let $\mathbb{F}=\mathbb{Q}(u_1,\ldots,u_m)$ be the field of rational functions in the indeterminates $u_1,\ldots, u_m$ with coefficients in $\mathbb{Q}$. Thus the elements of $\mathbb{F}$ are of the form $\dfrac{f(u_1,\ldots,u_m)}{g(u_1,\ldots,u_m)}$, where $f,g$ are polynomials with coefficients in $\mathbb{Z}$. A cluster algebra (of geometric type) is a subring of $\mathbb{F}$ determined by combinatorial data. The data determine a subset of $\mathbb{F}$ which generates the cluster algebra.

\begin{definition}
A \textbf{seed} is a pair $(\tilde{x}, \tilde{B})$ where

\begin{description}
\item[a) ] $\tilde{x} = \lbrace x_1, \ldots, x_m \rbrace$ is a free generating set of $\mathbb{F}$ over $\mathbb{Q}$; that is, $\mathbb{F}$ is generated as a field over $\mathbb{Q}$ by $\tilde{x}$, and the $x_i$ for $1\leq i \leq m$ are algebraically independent.

\item[b) ] $\tilde{B}=(b_{ij})$ is an $m\times n$ integer matrix where the first $n$ rows and the columns correspond to the elements of $\mathbf{x}= \lbrace x_1,\ldots,x_n \rbrace$ and the last $m-n$ rows correspond to the elements of $\mathbf{c}=\lbrace x_{n+1}, \ldots, x_m \rbrace$.

\item[c) ] The \textbf{principal part} $\textbf{B}=(b_{ij})_{1\leq i,j \leq n}$ of $\tilde{B}$ is sign skew symmetric matrix, that is for all $1 \leq i,j \leq n$, either $b_{ij}=b_{ji} = 0$, or $b_{ij}$ and $b_{ji}$ are non zero and of opposite sign. In particular, $b_{ii}$ must be zero for all $i$.
\end{description}
\end{definition}

The set $\textbf{x}$ is known as the \textit{cluster} of the seed, with its elements known as \textit{cluster variables}. The elements of the set $\textbf{c}$ are known as \textit{coefficients}, \textit{stable variables} or \textit{frozen variables}, and the union $\tilde{x}=\textbf{x} \cup \textbf{c}$ is know as the \textit{extended cluster} of the seed. The matrix $\tilde{B}$ is known as an \textit{exchange matrix}.

\begin{definition}
Fix $k \in I$. We define the \textbf{mutation} $\mu_k(\tilde{x},\tilde{B})=(\tilde{x'},\tilde{B'})$ of $(\tilde{x},\tilde{B})$ in the direction $k$ as follows: the cluster $\tilde{x'}$ is equal to $\tilde{x}$ with $x_k$ replaced by a new elements of $\mathbb{F}$, denoted by $x'_k$, satisfying the exchange relation 
\begin{equation}
x_kx'_k = \prod^{m}_{\substack{j=1 \\ b_{kj}>0}}x_{j}^{b_{kj}}+ \prod^{m}_{\substack{j=1 \\ b_{kj}<0}}x_{j}^{-b_{kj}}.
\label{eq1}
\end{equation}

Note that, as $b_{kk}=0$, $x_k$ does not appear on the right hand side of (\ref{eq1}). Furthermore, $\tilde{B}$ will be replaced by $\tilde{B'}=(b'_{ij})_{\substack{1 \leq i \leq m,\\ 1 \leq j \leq n}}$, where
\begin{equation}
b'_{ij}=\left\{
\begin{array}{ll}
-b_{ij} & \text{if } i=k \text{ or } j=k,\\
b_{ij}+\dfrac{|b_{ik}|b_{kj} + b_{ik}|b_{kj}|}{2} & \text{otherwise}.
\end{array} \right.
\end{equation}
\label{def6}
\end{definition} 

\begin{definition}
Two seeds $(\tilde{x},\tilde{B})$ and $(\tilde{y},\tilde{C})$ are said to be \textbf{equivalent} if there is a permutation $\pi$ of $\{ 1,\ldots, n, n+1, \ldots, m\}$ such that
\begin{description}
\item[a)] $\pi(I)=I,$
\item[b)] $\pi(i)=i$ for all $i \in \{ n+1, \ldots, m\}$,
\item[c)] $y_i=c_{\pi(i)}$ for all $i$,
\item[d)] $\tilde{C}_{i,j} = \tilde{B}_{\pi(i),\pi(j)}$ for all $i,j$.
\end{description}
\end{definition}

In \cite{fomin1} is stablished that there is a involution over the set of seeds $\mathcal{A}$, in other notes can write as the following lemma. 

\begin{Lemma}
Let $(\tilde{x},\tilde{B})$ be a seed. Then $\mu^2_k(\tilde{x},\tilde{B})=(\tilde{x},\tilde{B})$.
\label{lema1}
\end{Lemma}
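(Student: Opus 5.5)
We must show that applying the mutation $\mu_k$ twice returns the original seed. Write $\mu_k(\tilde{x},\tilde{B})=(\tilde{x'},\tilde{B'})$ and $\mu_k(\tilde{x'},\tilde{B'})=(\tilde{x''},\tilde{B''})$. We treat the matrix part and the cluster part separately, and handle the matrix first, since the second exchange relation is read off from $\tilde{B'}$.

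\textbf{Matrix part.} We check $b''_{ij}=b_{ij}$ entry by entry, using the formula of Definition \ref{def6}.

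\emph{Case $i=k$ or $j=k$.} The entry changes sign twice, so $b''_{ij}=-b'_{ij}=b_{ij}$.

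\emph{Case $i\neq k$ and $j\neq k$.} Then
\[
b''_{ij}=b'_{ij}+\frac{|b'_{ik}|b'_{kj}+b'_{ik}|b'_{kj}|}{2}.
\]
Substitute $b'_{ik}=-b_{ik}$ and $b'_{kj}=-b_{kj}$. The correction term becomes
\[
\frac{-|b_{ik}|b_{kj}-b_{ik}|b_{kj}|}{2},
\]
which is exactly the negative of the correction term in the first mutation. Hence $b''_{ij}=b_{ij}$. This is an elementary sign computation.

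A side point for the second step: $\mu_k$ is only defined on seeds, so $\tilde{B'}$ must have a sign-skew-symmetric principal part. Entries in row or column $k$ flip sign together, so they stay sign-skew-symmetric. For the other entries we would argue the usual way, assuming $B$ is skew-symmetrizable, as it is for the Dynkin quivers used in this paper. Then $DB$ skew-symmetric implies $DB'$ skew-symmetric. We only flag this point and do not prove it in general.

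\textbf{Cluster part.} For $j\neq k$ we have $x''_j=x'_j=x_j$, so only $x''_k$ needs checking. It is defined by
\[
x'_k x''_k=\prod_{b'_{kj}>0}x_j'^{\,b'_{kj}}+\prod_{b'_{kj}<0}x_j'^{\,-b'_{kj}}.
\]
Since $b'_{kj}=-b_{kj}$ and $x'_j=x_j$ for $j\neq k$, the two monomials are the two monomials of the original relation (\ref{eq1}), in the opposite order. The right-hand side therefore equals $x_kx'_k$. Dividing by $x'_k$, which is nonzero in the field $\mathbb{F}$, gives $x''_k=x_k$.

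The only real subtlety is the well-definedness point above; the rest is direct substitution.
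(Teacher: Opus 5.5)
Your proof is correct and takes the same route as the paper. The paper likewise obtains $x'_kx''_k=x_kx'_k$ by noting that $b'_{kj}=-b_{kj}$ swaps the two monomials of the exchange relation, and it checks $b''_{ij}=b_{ij}$ entrywise using exactly your two cases; your matrix-first ordering, explicit cancellation of $x'_k$, and remark on the sign-skew-symmetry of $\tilde{B'}$ only make explicit what the paper leaves implicit.
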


\begin{proof}
Let $(\tilde{x'},\tilde{B'})=\mu_{k}(\tilde{x},\tilde{B})$ and $(\tilde{x''},\tilde{B''})=\mu_{k}(\tilde{x'},\tilde{B'})$. Note that $b'_{kj}=-b_{kj}$ then

$$\begin{array}{rl}
{x'}_k{x''}_k & = \prod^{m}_{\substack{j=1 \\ {b'}_{kj}>0}}{x}_{j}^{{b'}_{kj}}+ \prod^{m}_{\substack{j=1 \\ {b'}_{kj}<0}}{x}_{j}^{{-b'}_{kj}} \\
 & = \prod^{m}_{\substack{j=1 \\ {b}_{kj}<0}}{x}_{j}^{-{b}_{kj}}+ \prod^{m}_{\substack{j=1 \\ {b}_{kj}>0}}{x}_{j}^{{b}_{kj}} \\
 & = x_kx'_k.
\end{array}$$

To show that $\tilde{B''}=\tilde{B}$, we proof ${b''}_{ij}=b_{ij}$ for all $1\leq i,j \leq m$.

\begin{itemize}
\item If $i=k$ or $j=k$, ${b''}_{ij}=-{b'}_{ij}=b_{ij}$.
\item If $i\neq k$ and $j \neq k$ so
$$\begin{array}{rl}
{b''}_{ij} & = {b'}_{ij}+\dfrac{|{b'}_{ik}|{b'}_{kj}+{b'}_{ik}|{b'}_{kj}|}{2} \\
 & = b_{ij} + \dfrac{|b_{ik}|b_{kj}+b_{ik}|b_{kj}|}{2} + \dfrac{|-b_{ik}|(-b_{kj})+(-b_{ik})|-b_{kj}|}{2} \\
  & = b_{ij} + \dfrac{|b_{ik}|b_{kj}+b_{ik}|b_{kj}|}{2} - \dfrac{|b_{ik}|b_{kj}+b_{ik}|b_{kj}|}{2} \\
  & = b_{ij}.
\end{array}$$
\end{itemize}

\end{proof}

\begin{definition}
Two seeds (or exchange matrix) are said to be \textbf{mutation equivalent} if there is a sequence of mutation taking one to the other. To the corresponding equivalence class, it is called \textbf{mutation class}.
\label{def1}
\end{definition}

Given a seed $(\tilde{x},\tilde{B})$, let $\mathcal{S}$ be the set of seeds obtained from $(\tilde{x},\tilde{B})$ by arbitrary finite sequences of mutations.

\begin{definition}[Cluster Algebra]
Suppose that the principal parts of all the matrices in the seeds in $\mathcal{S}$ are sign skew symmetric. The \textbf{cluster algebra} with initial seed $(\tilde{x},\tilde{B})$, denoted $\mathcal{A}(\tilde{x},\tilde{B})$, is the subring of $\mathbb{F}$ generated by $x_{n+1},\ldots, x_m$ and the elements of all the clusters of the seeds in $\mathcal{S}$. The \textit{rank} of the cluster algebra is $n$. If $K$ is any field, the cluster algebra over $K$ is defined in the same way, replacing $\mathbb{F}$ with $K(u_1,\ldots,u_m)$ and taking the $K-$subalgebra generated by the cluster variables and coefficients.
\end{definition}

For this work $K$ is the finite field $\mathbb{F}_q$, with $q=p^r$ where $p$ is a prime number and $r\in\mathbb{N}$; also suppose that $n=m$.

\begin{definition}[Normalized cluster algebra]
Let $\mathcal{S}$ be a set of seeds in $\mathcal{A}(\tilde{x},\tilde{B})$ with the following properties:
\begin{enumerate}
\item Every seed $(\tilde{x'},\tilde{B'}) \in \mathcal{S}$ admits mutations in all $n$ conceivable directions, and the results of all these mutations belong to $\mathcal{S}$.
\item Any two seeds in $\mathcal{S}$ are obtained from each other by a sequence of mutations.
\end{enumerate}
\end{definition}

In \cite{fomin1}, an exchange graph of cluster algebra is defined as

\begin{definition}\label{exchgraph}

The \textbf{exchange graph} of cluster algebra, $\mathcal{A}(\tilde{x},\tilde{B})$, is a graph $n$-regular whose vertices are labelled by the seeds in $\mathcal{S}$, joined by an edge if there is a mutation from each other.
\label{def11}
\end{definition}

It is often quite convenient to use a diagram notation for exchange matrices. Let $B$ be a skew symmetric integer $n \times n$ matrix. Then $B$ can be recorded as a directed graph, better known as a \textbf{quiver} $Q(B)$, with vertices corresponding to its rows and columns, the elements of $I$. If $b_{ij}>0$, there are $b_{ij}$ arrows from $i$ to $j$. Thus, if $b_{ij}<0$, there will be $-b_{ij}=b_{ji}$ arrows from $j$ to $i$. Note that $Q(B)$ has no loops(1-cycles) or 2-cycles. If the exchange matrix is not square, there are $m$ vertices. The vertices $i$ such that $i \in \lbrace n+1,\ldots,m \rbrace$, corresponding to the coefficients, are often drawn as empty circles, and are know as \textit{frozen vertices}, the arrows are drawn the same way. From this definition of quiver, there is a bijection between $n \times n$ skew symmetric integer matrices and quivers without loops or 2-cycles.

From similar way to the definition mutation of seed in $k$, is defined \textbf{quiver mutation} at vertex $k \in I$.  

\begin{enumerate}
\item For all paths of the form $i \rightarrow k \rightarrow j$ add an arrow from $i$ to $j$. The multiplicity is taken into account, that is if there are $a$ arrows from $i$ to $k$ and $b$ arrows from $j$ to $k$, we add $ab$ arrows from $i$ to $j$.

\item Cancel a maximal set of 2-cycles from those created in (1).
\item Reverse all arrows incident with $k$.
\end{enumerate}

\subsection{Cluster algebras of finite type}

A cluster algebra is said to be of \textit{finite type} if it has finitely many seeds. It is said to be of \textit{finite mutation type} if the set of principal parts of its exchange matrices is finite.  

We consider classification the cluster algebras of finite type by S. Fomin and A. Zelevinsky in 2003 (cf. \cite{fominfinito}). This classification is give in terms of finite type Cartan matrices(or Dynkin diagrams).

In particular, the quivers that we use for this work are those associated with Dynkin diagrams (cf.\cite{algebralie}), which generate cluster algebras of finite type. We propose to use Dynkin diagrams of finite type $A_n$, $B_n$, $C_n$ and $D_n$.

Let $B$ an $n \times n$ sign skew symmetric integer matrix whose mutation class contains only sign skew symmetric matrices. Then, any $m \times n$ integer matrix $\tilde{B}$ with principal part $B$ defines a cluster algebra. 

\begin{definition}
Let $\mathcal{A}(\tilde{x},\tilde{B})\subseteq \mathbb{F}$ and $\mathcal{A}(\tilde{y},\tilde{C})\subseteq \mathbb{F}'$ be cluster algebras. We say that $\mathcal{A}(\tilde{x},\tilde{B})$ and $\mathcal{A}(\tilde{y},\tilde{C})$ are strongly isomorphic (or isomorphic as cluster algebras) if there is a field isomorphism $\sigma : \mathbb{F}\rightarrow\mathbb{F}'$ such that $(\sigma(\tilde{x}),\tilde{B})$ is a seed in $\mathcal{A}(\tilde{y},\tilde{C})$. 
\label{def12}
\end{definition}
We denote $\mathcal{A}(\tilde{B})$ to the cluster algebra whose exchange matrix is the matrix $\tilde{B}$; that is, it is a cluster algebra defined by $B$.

\begin{definition}
The collection of all cluster algebras defined by $B$ is called family $Cl(B)$. Two families $Cl(B)$ y $Cl(B')$ are said to be strongly isomorphic provided each cluster algebra $\mathcal{A}(\tilde{B}) \in Cl(B)$ is strongly isomorphic to a cluster algebra $\mathcal{A}(\tilde{B'}) \in Cl(B')$, and vice versa.  
\end{definition}

For two matrices $\tilde{B}$ and $\tilde{B'}$, we write $\tilde{B} \sim_{mut} \tilde{B'}$ if $\tilde{B}$ and $\tilde{B'}$  lie in the same mutation class (as Definition \ref{def1}).

In \cite{fominfinito}, S. Fomin and A. Zelevinsky prove the following results.

\begin{Lemma}
Two families $Cl(B)$ and $Cl(B')$ are strongly isomorphic if and only if $B\sim_{mut} B'$.
\end{Lemma}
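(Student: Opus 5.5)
One direction is immediate from the definitions. Suppose $B\sim_{mut}B'$. Then there is a sequence $k_1,\ldots,k_s$ with $B'=\mu_{k_s}\cdots\mu_{k_1}(B)$.

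Take any $\mathcal{A}(\tilde{B})\in Cl(B)$ with initial seed $(\tilde{x},\tilde{B})$. Mutating this seed along the same sequence gives a seed $(\tilde{x}',\tilde{B}')$ of the same algebra. By the mutation rule of Definition \ref{def6}, the principal part of $\tilde{B}'$ is $B'$, because the principal part mutates independently of the frozen rows. Hence $\mathcal{A}(\tilde{x}',\tilde{B}')\in Cl(B')$.

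Since $(\tilde{x}',\tilde{B}')$ lies in the mutation class of $(\tilde{x},\tilde{B})$, both seeds generate the same subring of $\mathbb{F}$. Taking $\sigma=\mathrm{id}$ in Definition \ref{def12}, the seed $(\tilde{x}',\tilde{B}')$ belongs to $\mathcal{A}(\tilde{x},\tilde{B})$. So $\mathcal{A}(\tilde{B})$ is strongly isomorphic to $\mathcal{A}(\tilde{B}')\in Cl(B')$. The reverse inclusion is symmetric, using Lemma \ref{lema1}: mutation is an involution, so $\sim_{mut}$ is symmetric.

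For the converse, suppose $Cl(B)$ and $Cl(B')$ are strongly isomorphic. I would specialise to the coefficient-free algebra, taking $\tilde{B}=B$ with $m=n$. It is strongly isomorphic to some $\mathcal{A}(\tilde{C})$ with principal part $B'$, via a field isomorphism $\sigma$ such that $(\sigma(\tilde{x}),B)$ is a seed of $\mathcal{A}(\tilde{C})$. Every seed of $\mathcal{A}(\tilde{C})$ is reached from its initial seed $(\tilde{y},\tilde{C})$ by mutations. The principal part of its matrix is therefore obtained from $B'$ by those same mutations. Hence $B\sim_{mut}B'$, possibly up to the relabelling permutation allowed by seed equivalence.

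The main obstacle is exactly this step. It must be justified that the matrix attached to a given seed is determined by its cluster, and that the simultaneous permutation of rows and columns can be absorbed. Both points are results of Fomin and Zelevinsky's paper \cite{fominfinito} rather than consequences of what is stated here. I would invoke that uniqueness result at this point, interpreting $\sim_{mut}$ up to simultaneous permutation. I would also note that the frozen rows play no role in the comparison of principal parts.
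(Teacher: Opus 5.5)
Your proof is correct. The paper gives no argument for this lemma; it only quotes it from \cite{fominfinito}. Your route is to unpack the definitions directly:
\begin{itemize}
\item For the \emph{if} direction you push the initial seed of $\mathcal{A}(\tilde{B})$ along the mutation sequence. This works because the principal part mutates independently of the frozen rows, mutation-equivalent seeds give the same algebra (so $\sigma=\mathrm{id}$ works), and Lemma \ref{lema1} supplies the symmetric half.
\item For the \emph{only if} direction you read the matrix off the seed $(\sigma(\tilde{x}),B)$ that the strong isomorphism places inside $\mathcal{A}(\tilde{C})$.
\end{itemize}
What this buys is a self-contained, elementary proof where the paper has only a citation.

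However, the \emph{main obstacle} you identify is not an obstacle, and you should not invoke the result you propose. Definition \ref{def12} requires the whole pair $(\sigma(\tilde{x}),\tilde{B})$, matrix included, to be a seed of $\mathcal{A}(\tilde{y},\tilde{C})$. The seeds of that algebra are, by definition, those reached from $(\tilde{y},\tilde{C})$ by finite mutation sequences (up to seed equivalence). So $\tilde{B}$ is itself a mutation of $\tilde{C}$ up to relabelling, and nothing about a matrix being determined by its cluster is needed. The sentences just before your obstacle paragraph already finish the proof. That uniqueness statement is also far deeper than this lemma, and for general $B$ it is not something \cite{fominfinito} supplies, so leaning on it would only weaken the argument.

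Your permutation caveat, by contrast, is essential once seeds are taken up to the equivalence the paper defines, as you do. For $n=3$, compare the matrix with vertex $1$ isolated and a single arrow $2\to 3$ against the one with vertex $2$ isolated and a single arrow $1\to 3$. Swapping $x_1$ and $x_2$ gives a strong isomorphism of the two families. Yet no mutation changes which vertex is isolated, so the two matrices are not related by mutations alone. Hence $\sim_{mut}$ in the lemma must allow a simultaneous permutation of rows and columns, which the paper's Definition \ref{def1} does not include.
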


\begin{Theorem}

\begin{description}
\item[a)] All cluster algebras in a family $Cl(B)$ are simultaneously of finite or infinite type.
\item[b)] A cluster algebra is of finite type if and only if the Cartan counterpart of the principal part of one of its seeds is a Cartan matrix of finite type.
\item[c)] The families of cluster algebras of finite type are classified up to strong isomorphism by the Cartan matrices of finite type(up to simultaneous permutation of rows and columns), with the Cartan matrix associated to a given family of cluster algebras arising as in (b) from any cluster algebra in the family.
\end{description}
\label{teo1}
\end{Theorem}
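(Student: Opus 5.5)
This is Fomin--Zelevinsky's finite type classification, so the plan is to reconstruct the main lines of their argument from \cite{fominfinito} rather than invent a new proof.

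\textbf{Part (a).} The key fact is that the exchange graph of $\mathcal{A}(\tilde{B})$ depends only on the principal part $B$. More precisely, for any two extensions $\tilde{B},\tilde{B}'$ of the same $B$, I would use separation of additions to show that the exchange graph of the coefficient-free algebra $\mathcal{A}(B)$ is a quotient of the exchange graph of $\mathcal{A}(\tilde{B})$. For the principal-coefficients algebra, the exchange graph covers every other one. I would then invoke the result of \cite{fominfinito} that, in finite type, all these exchange graphs coincide. Hence finiteness of the set of seeds is a property of $B$ alone.

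\textbf{Part (b), ``if'' direction.} Suppose the Cartan counterpart $A(B)$ is of finite type. Take the root system $R$ with Cartan matrix $A(B)$ and consider the set $\Phi_{\ge -1}$ of almost positive roots. Since $B$ is mutation equivalent to a bipartite matrix, I would first reduce to the bipartite case. Next I would build the piecewise-linear involutions $\tau_+,\tau_-$ on $\Phi_{\ge -1}$ and establish a bijection between cluster variables and $\Phi_{\ge -1}$ through denominator vectors. This uses the Laurent phenomenon together with the ``compatibility degree'' description of clusters. Because $R$ is finite, there are only finitely many cluster variables, and therefore only finitely many seeds.

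\textbf{Part (b), ``only if'' direction.} Suppose the algebra has finitely many seeds. Then every matrix $B'$ in the mutation class satisfies $|b'_{ij}b'_{ji}|\le 3$; otherwise a rank-two subalgebra would be of infinite type, which one checks directly from the rank-two recurrence. The hard combinatorial step is to show that the diagram of some matrix in the mutation class is a Dynkin diagram. I would run the Fomin--Zelevinsky argument: $2$-finite diagrams admit no cycle configurations outside the allowed oriented ones, and one exhibits explicit mutations producing large entries from each extended Dynkin (affine) subdiagram. Combined with the known classification of graphs with no affine subgraph, this shows a Dynkin representative exists. I expect this to be the main obstacle.

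\textbf{Part (c).} By the Lemma preceding Theorem \ref{teo1}, families up to strong isomorphism correspond to mutation classes. It therefore remains to show that two Dynkin-type exchange matrices are mutation equivalent exactly when their Cartan counterparts agree up to simultaneous permutation. For the forward implication, I would show that the Cartan type is a mutation invariant; it can be read off from, for example, the number of cluster variables $|\Phi_{\ge -1}|$ together with the exchange graph. For the converse, I would use that all orientations of a tree are related by sink and source mutations.
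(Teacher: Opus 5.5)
The paper does not prove this theorem. It simply quotes it from \cite{fominfinito}, so your outline has to stand on its own. Its overall shape is Fomin--Zelevinsky's: a rank-two obstruction, $2$-finite $\Rightarrow$ Dynkin, almost positive roots for the converse, and sink/source mutations. But two steps fail as written.

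In (c), the invariant you suggest cannot separate $B_n$ from $C_n$ for $n\ge 3$. Both types have $|\Phi_{\ge -1}|=n(n+1)$ cluster variables. Their exchange graphs are also isomorphic: the cluster complexes of dual root systems are isomorphic, and both generalized associahedra are combinatorially the cyclohedron. So the number of cluster variables together with the exchange graph fails to determine the Cartan type in exactly the case that needs an extra idea.

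What does work is the skew-symmetrizer. If $DB$ is skew-symmetric with $D$ positive diagonal, then $D\mu_k(B)$ is skew-symmetric with the same $D$, and for connected $B$ such a $D$ is unique up to a scalar. Its entries record the long/short pattern of the simple roots. Hence a matrix of type $B_n$ ($n-1$ long simple roots, one short) is never mutation equivalent to one of type $C_n$ ($n-1$ short, one long). Together with the rank and $|\Phi_{\ge-1}|$, this separates all finite types.

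Part (a) is circular. The two coverings you describe only give ``principal coefficients finite $\Rightarrow$ arbitrary coefficients finite $\Rightarrow$ coefficient-free finite''. A covering of a finite graph can be infinite, so the loop does not close. The fact that all exchange graphs coincide in finite type is a corollary of the classification: it presupposes that every algebra in $Cl(B)$ is of finite type, which is (a) itself. (Separation of additions and the principal-coefficient covering also belong to the later theory of \emph{Cluster algebras IV}, not to \cite{fominfinito}.)

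Instead, deduce (a) from (b). The principal part of $\mu_k(\tilde B)$ is $\mu_k(B)$. So, whatever the coefficient rows, the principal parts of the seeds of $\mathcal{A}(\tilde B)$ form the mutation class of $B$, and the criterion in (b) depends on $B$ alone. This requires proving both directions of (b) for arbitrary coefficients, as Fomin--Zelevinsky do. That is possible because the rank-two growth of denominators and the denominator-vector bijection with $\Phi_{\ge-1}$ do not depend on coefficients.

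Two smaller points. First, in the ``only if'' direction your ingredients do not yet assemble. A $2$-finite diagram may contain oriented cycles (the oriented triangle has type $A_3$), so the classification of graphs without affine subgraphs cannot be applied to it directly. Fomin--Zelevinsky supply the missing reduction by induction on the number of vertices: delete a vertex $v$ with $\Gamma-v$ connected, mutate $\Gamma-v$ into a Dynkin diagram, and then analyze the $2$-finite one-vertex extensions of Dynkin diagrams. Second, ``finitely many cluster variables, hence finitely many seeds'' needs each cluster to carry only finitely many seeds. Fomin--Zelevinsky obtain this from the explicit description of every seed via a cluster of almost positive roots.
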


In \cite{grafvaluada}, its defined a \textbf{valued graph} $G$ on vertices $1,\ldots,n$ as a graph with no loops and at most one
arrow between any pair of vertices, together with integers $v_{ij}$ for all $i,j$ for which there is an edge between $i$ and $j$, such that there are positive integers $d_1,\ldots,d_n$ such that $d_iv_{ij} = d_jv_{ji}$ for all $i,j$, that is the matrix $(v_{ij})$(with zero entries where there is no edge) is symmetrizable. If $v_{ij} = v_{ji} = 1$ for all $i, j$, then $G$ is regarded as a graph in the usual sense. We write $v_{ij}$ on the edge between $i$ and $j$ near $i$ and $v_{ji}$ near $j$. For edges with $v_{ij} = v_{ji} = 1$, we omit the labels. An orientation of a valued graph is just an orientation of its underlying graph, and is known as a \textbf{valued quiver}.

\begin{Theorem}
\begin{description}
\item[a)] All cluster algebras in a family $Cl(B)$ are simultaneously of finite or infinite type.
\item[b)] A cluster algebra is of finite type if and only if the underlying valued graph of the principal part of one of its valued quiver seeds is a Dynkin diagram (with the above identification). 
\item[c)] The families of cluster algebras of finite type are classified up to strong isomorphism by the Dynkin diagrams, with the Dynkin diagram associated to a given family of cluster algebras arising as in (b) from any cluster algebra in the family.
\end{description}
\label{teo2}
\end{Theorem}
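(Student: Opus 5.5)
Theorem \ref{teo2} is the valued-graph reformulation of Theorem \ref{teo1}, so the plan is to deduce it from Theorem \ref{teo1} via a dictionary between Cartan counterparts and valued graphs, rather than reprove the Fomin--Zelevinsky classification.

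First, set up the dictionary. Given the principal part $B=(b_{ij})$ of an exchange matrix, which is sign skew symmetric and in finite-type situations skew-symmetrizable, associate the valued quiver $G(B)$. It has an arrow $i\to j$ whenever $b_{ij}>0$, with values $v_{ij}=|b_{ij}|$ and $v_{ji}=|b_{ji}|$. Skew-symmetrizability $d_ib_{ij}=-d_jb_{ji}$ gives exactly $d_iv_{ij}=d_jv_{ji}$, so $G(B)$ is a valued quiver in the sense of \cite{grafvaluada}. The underlying valued graph of $G(B)$ records precisely the off-diagonal entries $-|b_{ij}|$ of the Cartan counterpart $A(B)$, and conversely $A(B)$ is recovered from it. Then check, using the standard correspondence in \cite{algebralie}, that $A(B)$ is a Cartan matrix of finite type (up to simultaneous permutation of rows and columns) if and only if the valued graph is a Dynkin diagram. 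Under the identification $d_{\alpha\beta}=\langle\alpha,\beta\rangle\langle\beta,\alpha\rangle=v_{ij}v_{ji}$, the number of lines and the arrow direction (long to short root) are read off from which of $v_{ij},v_{ji}$ exceeds $1$. Lemma \ref{lem11} guarantees that $v_{ij}v_{ji}\in\{0,1,2,3\}$, so the identification is well defined.

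With the dictionary in place, the three parts follow from the corresponding parts of Theorem \ref{teo1}:
\begin{description}
\item[a)] This is literally Theorem \ref{teo1}(a).
\item[b)] Translate the condition ``the Cartan counterpart of the principal part of some seed is of finite type'' into ``the underlying valued graph of that seed's valued quiver is Dynkin''.
\item[c)] Classes of Cartan matrices up to simultaneous permutation correspond bijectively to isomorphism classes of valued graphs, since relabelling vertices is exactly simultaneous permutation. The classification up to strong isomorphism therefore transfers verbatim.
\end{description}

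The main obstacle is making the dictionary genuinely bijective. The valued graph forgets orientation and signs, and Cartan counterparts also forget signs, so both sides lose the same information. The remaining danger is that nonsymmetric entries, such as $b_{ij}=2$ and $b_{ji}=-1$ in types $B_n$, $C_n$, must map to the arrow direction matching the Dynkin convention and not its transpose. I would fix the convention by checking $B_2$ and $C_2$ explicitly. I would then note that any transpose mismatch only interchanges $B_n$ and $C_n$ globally, which preserves the property of being Dynkin. So parts (a) and (b) are unaffected, and in (c) it amounts to a relabelling of the classification.
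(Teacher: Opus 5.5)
Your proposal is correct and matches how the paper handles this result: the paper gives no independent proof, but places Theorem \ref{teo2} directly after Theorem \ref{teo1} as its restatement in the Dlab--Ringel valued-graph language. That restatement is exactly the dictionary $v_{ij}=|b_{ij}|$ between Cartan counterparts and valued graphs that you set up. The one assertion you leave unjustified is that a principal part whose Cartan counterpart is of finite type is skew-symmetrizable, so that it has a valued quiver at all; this follows in one line from sign skew symmetry together with the symmetrizability $d_i|b_{ij}|=d_j|b_{ji}|$ of a finite-type Cartan matrix.
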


We write $\mathcal{A}_{\Gamma}$ for the cluster algebra of finite type corresponding to Dynkin diagram $\Gamma$. $\Delta$  is a simple root system of $R$, that is a base which elements have positive coefficients. We consider $R^{+}\subset R $ as the set of roots such that are positive lineal combination of elements in $\Delta$, and $R^{-}\subset R $ the set of roots such that are negative lineal combination of elements in $\Delta$. With this notation $R_{\geq -1}=R^{+}\cup(-\Delta)$.

The following result show the relation between cluster algebra of finite type and the root system corresponding the same Dynkin diagram $\Gamma$ (cf. \cite{teor1},\cite{teor2}).

\begin{Theorem}\label{T16}
Let $\mathcal{A}_{\Gamma}$ be a cluster algebra of finite type. Considerer $\tilde{x}= \{ x_1, \ldots, x_n \}$ an initial cluster for $\mathcal{A}_{\Gamma}$ and $R$ a root system corresponding to $\Gamma$ with $\Delta = \{ \alpha_1, \ldots , \alpha_n \}$. For a root $\alpha = d_1\alpha_1 + \cdots + d_n \alpha_n \in R$, let $x^{\alpha}=x_1^{d_1}\cdot \cdots x_n^{d_n}$. Then there is a bijection $\alpha\mapsto x[\alpha]$ between $R_{\geq -1}$ and the cluster variables of $\mathcal{A}$, such that $$x[\alpha]= \frac{P_{\alpha}(x_1, \ldots, x_n)}{x^{\alpha}},$$ where $P_{\alpha}$ is a polynomial in $x_1,\ldots,x_n$ with positive integer coefficient, and non zero constant term. In particular $x[-\alpha_i]=x_i$ for $i=1,\ldots,n$.
\end{Theorem}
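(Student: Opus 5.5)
This is the Fomin--Zelevinsky correspondence between almost positive roots and cluster variables, so I would follow their argument from \cite{fominfinito} rather than look for a new one. First I reduce to a convenient initial seed. By Theorem \ref{teo2} and the strong isomorphism lemma, every cluster algebra in the family $Cl(B)$ is strongly isomorphic to one with a \emph{bipartite} initial seed. In that seed the Dynkin diagram $\Gamma$ is a tree, so its vertex set splits as $I=I_+\sqcup I_-$ with every edge joining $I_+$ to $I_-$. I would also orient the signs so that $b_{ij}>0$ exactly when $i\in I_+$ and $j\in I_-$. Since the cluster variables are unchanged under these identifications, it suffices to prove the statement for this seed.

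Next I build the map. I define piecewise-linear involutions $\tau_\pm$ on $R_{\ge -1}$: for $\varepsilon\in\{+,-\}$, set $\tau_\varepsilon(-\alpha_i)=-\alpha_i$ when $i\notin I_\varepsilon$, and $\tau_\varepsilon(\alpha)=\prod_{i\in I_\varepsilon}s_i(\alpha)$ otherwise. These are well defined because a simple reflection permutes $R^+\setminus\{\alpha_i\}$ (axiom (R3) together with the base property (B2)). On the cluster side, the composite mutations $\mu_\pm=\prod_{i\in I_\pm}\mu_i$ commute internally, since the vertices of $I_\pm$ are pairwise non-adjacent. Each of them returns the bipartite seed with $B$ replaced by $-B$. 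Thus they generate a dihedral action on cluster variables. I set $x[-\alpha_i]=x_i$ and extend $\alpha\mapsto x[\alpha]$ equivariantly, so that $\mu_\pm$ on variables corresponds to $\tau_\pm$ on roots.

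Then I check the two claimed properties.

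\textbf{Well-definedness and bijectivity.} Every $\langle\tau_+,\tau_-\rangle$-orbit on $R_{\ge-1}$ meets $-\Delta$. This holds because $(\tau_-\tau_+)$ has order $(h+2)/2$ or $h+2$, with $h$ the Coxeter number, and each orbit passes through a negative simple root. The same periodicity for $\mu_-\mu_+$ on seeds makes the extension consistent. Injectivity comes from comparing denominators in the next step. Surjectivity holds because every cluster variable arises from the bipartite seed by alternating $\mu_\pm$ together with single mutations; I would use the \emph{compatibility degree} to show that clusters correspond to maximal compatible subsets of $R_{\ge-1}$.

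\textbf{Shape of $x[\alpha]$.} By induction along the orbit, I show the Laurent expansion of $x[\alpha]$ has denominator exactly $x^\alpha$, with a numerator $P_\alpha$ that has positive coefficients and nonzero constant term. The exchange relation (\ref{eq1}) gives $x[\tau_\varepsilon\alpha]$ in terms of neighbors. To track the denominator vector I use a tropical version of (\ref{eq1}), $d'_k=-d_k+\max(\sum_{b_{kj}>0}b_{kj}d_j,\sum_{b_{kj}<0}-b_{kj}d_j)$, and show it coincides with $\tau_\varepsilon$ acting linearly on positive roots. Positivity and the nonzero constant term then follow by substituting the expansions. The sum of products of subtraction-free polynomials with nonzero constants keeps a nonzero constant term. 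Division by $x_k$ remains Laurent by the Laurent phenomenon, and stays subtraction-free because the specialization $x=1$ gives a positive integer.

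The main obstacle I expect is the denominator step. The tropical recursion agrees with the reflection $\tau_\varepsilon$ only when the numerator polynomials are genuinely not divisible by any $x_i$, and that fact itself must be carried in the induction. I would first attempt this directly in types $A_n$, $B_n$, $C_n$, $D_n$, which are the types the paper uses, and fall back on citing \cite{teor1,teor2} for the uniform argument.
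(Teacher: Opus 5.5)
\paragraph{Gap 1: the reduction to a bipartite seed.} Your first step is invalid. The statement is about Laurent expansions in the \emph{given} initial cluster $\tilde{x}$, and denominators depend on which cluster you expand in. A strong isomorphism $\sigma$ onto an algebra with bipartite initial seed $(\tilde{y},\tilde{C})$ carries $(\tilde{x},\tilde{B})$ to the seed $(\sigma(\tilde{x}),\tilde{B})$. In general this is not the initial seed $(\tilde{y},\tilde{C})$. Knowing that the denominators are $y^{\alpha}$ in the cluster $\tilde{y}$ therefore says nothing about denominators with respect to $\sigma(\tilde{x})$: the cluster variables are unchanged, but their denominators are not.

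In fact no reduction can work for an arbitrary initial cluster, because the statement is then false. Take the oriented $3$-cycle $x_1\to x_2\to x_3\to x_1$, which is mutation equivalent to $A_3$. Its non-initial cluster variables are $(x_2+x_3)/x_1$, $(x_1+x_3)/x_2$, $(x_1+x_2)/x_3$, and $(x_1+x_2+x_3)/(x_ix_j)$ for $i<j$. None has denominator $x_1x_2x_3$, so the highest root of $A_3$ is unmatched under every labelling of the simple roots.

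The theorem must therefore be read for an acyclic initial seed, i.e.\ an orientation of $\Gamma$ as with the quivers $\Gamma_r$. Non-bipartite orientations genuinely occur. For the linear seed $C_1$ of Section 4, the bipartite seed is $C_2=\mu_2(C_1)$, whose cluster $\{x_0,x_1,(x_1+1)/x_2\}$ differs from $\{x_0,x_1,x_2\}$. Your $\tau_\pm$, $\mu_\pm$ argument covers only the bipartite orientation. For a general acyclic orientation you need either:
\begin{itemize}
\item an argument built on that seed itself, e.g.\ replacing $\tau_+\tau_-$ by a Coxeter element adapted to the orientation and $\mu_-\mu_+$ by the corresponding sequence of sink mutations (which returns $\tilde{B}$ to itself); or
\item the acyclic-seed results \cite{teor1,teor2}.
\end{itemize}
The paper gives no proof of its own and relies on exactly these citations.

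\paragraph{Gap 2: the positivity step.} This step is also unsound. In $x'_k=(M_1+M_2)/x_k$ the divisor $x_k$ is the current cluster variable. It is a Laurent polynomial in the initial cluster, not a monomial. A quotient of subtraction-free polynomials that happens to be a polynomial need not be subtraction-free. For example, $(x^3+1)/(x+1)=x^2-x+1$, whose value at $x=1$ is the positive integer $1$. So specializing at $x=1$ tells you nothing about the signs of the coefficients. Positivity of $P_\alpha$ is not a formal consequence of the Laurent phenomenon and the exchange recursion, and it needs a separate, genuinely harder argument.
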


Note that if $\alpha=d_1\alpha_1 + \cdots + d_n \alpha_n$ is a root of root system $R$. Then by definition of base $d_i \in \mathbb{Z}$, $1\leq i\leq n$,  and all coefficient other than zero $d_i$ have the same sign. Thus the elements other than zero in $\lbrace d_1, \ldots, d_n \rbrace$ are all positive or all negative.  To have a bijection as established by the Theorem \ref{T16} is necessary $\alpha \in R_{\geq -1}$, so the only ones negative roots are  precisely $\alpha = -\alpha_i$, $1\leq i \leq n$,  and again by Theorem \ref{T16} we have  $x[-\alpha_i]=x_i$. Other roots in $R_{\geq -1}$ are lineal combination of elements of $\Delta$ and its coefficients are $d_i \in \mathbb{Z}_{\geq0}$ for all $1\leq i \leq n$. Thus $x_1^{d_1}\cdot \cdots x_n^{d_n}$ is a monomial that is product of cluster variables with positive exponents.

\section{Cryptosystem}

Since we will be using finite fields, firstly it is necessary to remember their algebraic structure and characterization.

\begin{Remark}

Consider the finite field $\mathbb{F}_{q}=\mathbb{Z}_p[x]/\langle f(x) \rangle$, where $q=p^r$, $p$ is a number prime and $r$ is the degree of an irreducible polynomial $f(x)$ in $\mathbb{Z}_p$. $\mathbb{F}_{q}$ is an extension field of $\mathbb{Z}_p$ of degree $r$.

Let $\alpha \in \mathbb{F}_q$ a root of $f(x)$ in $\mathbb{F}_{q}$, if $\beta \in \mathbb{F}_q$, $\beta$ will can be represented as $$\beta = a_0 + a_1 \alpha + \cdots + a_{r-1}\alpha^{r-1}, a_i \in \mathbb{Z}_p.$$

Also $\lbrace 1,\alpha,\ldots, \alpha^{r-1} \rbrace$ is a base for $\mathbb{F}_q$, seen as space vector of $\mathbb{Z}_p$. The isomorphism
\begin{equation}
\begin{array}{rlcl}
\varphi: & \mathbb{F}_q & \rightarrow & \mathbb{Z}^r_p \\
    & \beta & \rightarrow & (a_0,a_1,\ldots, a_{r-1})
\end{array},
\label{eq3}
\end{equation} gives a representation of the elements of finite field $\mathbb{F}_q$, which we use in the cryptosystem  that is  explained in this work.
\end{Remark}

Suppose two entities, $A$ and $B$, wish to exchange information securely. We want to ensure that if any entity outside of $A$ and $B$ intercepts the information, it will be difficult or impossible for them to decrypt it.

The goal of a cryptographic system is to make a message, denoted $c$ (encrypted), illegible during transmission. This message is the one $ A $ sends to $B$. 
When $ B$ receives $c$, it must know how to recover (decrypt) it.

In this section, the message $m$ can be seeing as a combination lineal of cluster variables and as part of a cluster

Let $m$ the message that  $A$ want to send  to $B$. We choose a number prime $p$ and a number $r\in \mathbb{N}$ so $q=p^r$. The message $m$ can be seen as an element other than zero in $\mathbb{F}_{q}$ as in (\ref{eq3}).  

We denote by $\Gamma_r$ the quiver corresponding to the Dynkin diagram $\Gamma$ with $r$ vertices, where $r$ is the degree of the irreducible polynomial $f(x)$ in $\mathbb{Z}_p$. Remember that we propose to use the Dynkin diagrams of type $A_r$, $B_r$, $C_r$ and $D_r$. Then we denote by $\mathcal{A}_{r}$  the cluster algebra associated to the Dynkin diagram corresponding to the quiver $\Gamma_r$.

For the cluster algebra  $\mathcal{A}_r$ we consider as initial seed the cluster $\tilde{x}=\lbrace x_0,x_1,\ldots,x_{r-1} \rbrace$ and as exchange matrix $\tilde{B}$ the adjacency matrix that represents the quiver $\Gamma_r$ which is obtained from the  Dynkin diagram $\Gamma$, where the vertices of $\Gamma_r$ are labelled by $x_0,x_1,\ldots,x_{r-1}$. We can associated for each cluster variable of initial cluster $x_i$ the element $\alpha^i \in \mathbb{F}_q$ according to the next result.

\begin{Proposition}\label{bijection}
Let  $A=\lbrace x_0,\ldots, x_{r-1} \rbrace$ be the set of cluster variables of initial cluster for $\mathcal{A}_r$, and let  $B=\{1,\alpha,\ldots,\alpha^{r-1}\}\subset \mathbb{F}_q$ be the set such that $\alpha$ is a root of $f(x)$ in $\mathbb{F}_q$. Thus the function $\sigma: A \rightarrow B$ such that $\sigma(x_i)=\alpha^i$ is a bijection.
\label{prop1} 
\end{Proposition}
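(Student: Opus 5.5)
The plan is to show that both $A$ and $B$ have exactly $r$ elements and that $\sigma$ is injective; bijectivity then follows, since surjectivity is immediate from the definition.

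First, $A$ has exactly $r$ distinct elements. The initial seed $(\tilde{x},\tilde{B})$ of $\mathcal{A}_r$ has a cluster $\tilde{x}=\{x_0,\ldots,x_{r-1}\}$ which, by the definition of a seed, is a free generating set of the ambient field. Its elements are algebraically independent, so in particular they are pairwise distinct. Hence $|A|=r$.

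Second, $B$ has exactly $r$ distinct elements. By the Remark in this section, $\mathbb{F}_q=\mathbb{Z}_p[x]/\langle f(x)\rangle$ with $\deg f=r$ and $f$ irreducible, and $\{1,\alpha,\ldots,\alpha^{r-1}\}$ is a basis of $\mathbb{F}_q$ over $\mathbb{Z}_p$. This rests on the standard fact that the minimal polynomial of $\alpha$ is $f$ (up to a scalar), of degree $r$, so no nonzero polynomial of degree $<r$ kills $\alpha$. If $\alpha^i=\alpha^j$ with $0\le i<j\le r-1$, then $\alpha^j-\alpha^i=0$ would be a nontrivial linear relation among basis elements, which is impossible. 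So the powers $\alpha^0,\ldots,\alpha^{r-1}$ are pairwise distinct and $|B|=r$.

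Now the map itself. Since the $x_i$ are distinct, $\sigma$ is well defined by $\sigma(x_i)=\alpha^i$. It is injective: if $\sigma(x_i)=\sigma(x_j)$, then $\alpha^i=\alpha^j$, so $i=j$ by the previous step, and hence $x_i=x_j$. It is surjective because each $\alpha^i\in B$ equals $\sigma(x_i)$. Alternatively, an injective map between finite sets of the same cardinality $r$ is automatically bijective.

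There is no real obstacle in this argument. The only point requiring care is the distinctness of the powers $\alpha^i$ for $i<r$, which relies on the irreducibility of $f$ (equivalently, the basis property of $\{1,\alpha,\ldots,\alpha^{r-1}\}$), and which I would take as known from the Remark.
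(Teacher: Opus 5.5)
Your proof is correct and takes the same route as the paper, whose proof simply notes that $A$ and $B$ are finite sets of the same cardinality and that $\sigma$ is therefore bijective by its definition. You also justify why the cardinalities agree: the $x_i$ are pairwise distinct by algebraic independence, and $1,\alpha,\ldots,\alpha^{r-1}$ are pairwise distinct by the irreducibility of $f$. The paper leaves both of these points implicit.
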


\begin{proof}
The cardinality of sets is finite and the same. By definition of $\sigma$  we have  that $\sigma$ is bijective.
\end{proof}

Note that from (\ref{eq3}), $m=(a_0,a_1,\ldots,a_{r-1}) \in \mathbb{Z}_p^r$, also can be seen as element in $\mathbb{F}_q$, $m= a_0\alpha^0+a_1\alpha^1 + \cdots + a_{r-1}\alpha^{r-1}$. Therefore, by Proposition \ref{prop1}, $m= a_0x_0+a_1x_1 + \cdots + a_{r-1}x_{r-1}$; that is $m$ can be seen as a a linear combination of cluster variables. 





\subsection{Encryption of the message}

For encryption of the message, we use a key encryption, $k$, which is a key private which is only known by
 the identities $A$ and $B$, and the same key help us to decipher the message.

The key encryption
\begin{equation}
k=\{k_0,k_1,\ldots,k_t \},
\label{eq6}
\end{equation}
is an integers sequence such that $0 \leq k_i \leq r-1$. The integers $k_1,\ldots, k_{t}$ represent a sequence of mutations in the cluster algebra $\mathcal{A}_r$.

The first term in the sequence, $k_0$, indicates the position where is saving the message $m$. A quiver $\Gamma'_r$ is constructed the same form that $\Gamma_r$, except in the labelled of the vertex $x_{k_0}$, instead we write $m=a_0x_0+a_1x_1 + \cdots + a_{r-1}x_{r-1}$. As $\Gamma_r$ and $\Gamma'_r$ have associate the same Dynkin diagram $\Gamma$, by Theorems \ref{teo1} and \ref{teo2}, the cluster algebra associated to $\Gamma_r$ and $\Gamma'_r$ are isomorphic strongly. By example, for the quivers $A_3$ and $A'_3$ with $k_0=1$.

\begin{center}
\begin{tabular}{cc}
\xymatrix{
x_0 \ar[r] & x_1  & x_2 \ar[l] }

 & 

\xymatrix{
x_0 \ar[r] & m  & x_2 \ar[l] } \\

$A_3$ & $A'_3$
\end{tabular}
\end{center}

By Definitions \ref{def11} and \ref{def12} the exchange graphs associated the quivers $\Gamma_r$ and $\Gamma'_r$ are isomorphic. Therefore, mutations can be made for $\Gamma_r$ and then in the seed of the last mutation we can replace $x_{k_0}$ by $m= a_0x_0+a_1x_1 + \cdots + a_{r-1}x_{r-1}$.

Note that if $k_i = k_{i+1}$ for any $i \in \lbrace 1,\ldots,t-1 \rbrace$, $\mu_{k_{i+1}}(\mu_{k_i}) = \mu^2_{k_i}$ and by Lemma \ref{lema1} $\mu^2_{k_i}(\tilde{x},\tilde{B})=(\tilde{x},\tilde{B})$. That is, the seed not modify; so to avoid this,  $k_1,\ldots, k_t$   are chosen  in such a way that each pair of integers $k_i,k_{i+1}$ with $i \in \lbrace 1,\ldots,t-1 \rbrace$ are different.

In the sequence $\{k_1,\ldots, k_t\}$ must appearing $k_0$ at least once, because we want that the cluster variable $x_{k_0}$ to be  modified after doing the mutation sequence. So in the place $k_0$ of the cluster will appear a new cluster variable. 

From (\ref{eq1}) in the Definition \ref{def6} and the result of the mutation of the quiver $ \Gamma_r $ at vertex $ k_0 $, we note that in the new vertex $ x'_{k_0} $ appears the cluster variables that are adjacent to $ x_{k_0} $ in the quiver $ \Gamma_r $. Thus, before $ k_0 $ in the sequence $\{ k_1, \ldots, k_t \}$ must appear at least one $ k_i $, with $ i \in \lbrace 1, \ldots, t-1 \rbrace $, such that $ x_{k_i} $ is adjacent to $ x_{k_0} $.

The encrypt message $\tilde{c}$, is obtained as part of seed:
$$(\tilde{c},\tilde{B'})=\mu_{k_{t}}(\mu_{k_{t-1}}(\cdots \mu_{k_2}(\mu_{k_1}(\tilde{x},\tilde{B})))),$$ and subsequently replacing each cluster variable $x_{k_0}$ by $m= a_0x_0+a_1x_1 + \cdots + a_{r-1}x_{r-1}$. Thus, the seed $(\tilde{c},\tilde{B'})$ is send to $B$.

\subsection{Decryption of the message}

As the $ k $ key is private, $ B $ can be manipulated to reverse the order and get
\begin{equation}
l=\lbrace k_{t}, k_{t-1}, \ldots, k_2, k_1, k_0 \rbrace,
\label{eq7}
\end{equation}
where the last number indicate the place where is saved the encrypted message.

When $B$ receives the seed $(\tilde{c},\tilde{B'})$, $B$ does the mutations sequence $k_{t}, k_{t-1}, \ldots, k_2, k_1$ and obtains the seed which contains the plaintext $m$ as stablished in the following theorem. 

\begin{Theorem}
Let $\mathcal{A}_r$ be the cluster algebra associated to the quiver $\Gamma_r$, $k={k_0,k_1,\ldots,k_t}$ as in (\ref{eq6}), $l=\lbrace k_{t}, k_{t-1}, \ldots, k_2, k_1, k_0 \rbrace$ as in (\ref{eq7}) and let $(\tilde{x},\tilde{B})$ be  the initial seed of $\mathcal{A}_r$. If $(\tilde{c},\tilde{B'})=\mu_{k_{t}}(\mu_{k_{t-1}}(\cdots \mu_{k_2}(\mu_{k_1}(\tilde{x},\tilde{B}))))$ then $(\tilde{x},\tilde{B})= \mu_{k_{1}}(\mu_{k_{2}}(\cdots \mu_{k_{t-1}}(\mu_{k_t}(\tilde{c},\tilde{B'}))))$. 
\label{teo10}
\end{Theorem}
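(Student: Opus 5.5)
The plan is to prove the statement by induction on the length $t$ of the mutation sequence. The only input needed is Lemma \ref{lema1}: each mutation $\mu_k$ is an involution on seeds, $\mu_k(\mu_k(\tilde{x},\tilde{B}))=(\tilde{x},\tilde{B})$. We apply it to an arbitrary seed of $\mathcal{A}_r$, not only to the initial one. This is legitimate because the proof of Lemma \ref{lema1} uses nothing about the seed beyond the exchange relation (\ref{eq1}) and the matrix mutation rule of Definition \ref{def6}. Finite type (Theorems \ref{teo1}, \ref{teo2}) is not needed; it only guarantees that every intermediate seed is a genuine seed of $\mathcal{A}_r$, so all mutations are defined.

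Write $\Sigma_0=(\tilde{x},\tilde{B})$ and $\Sigma_i=\mu_{k_i}(\Sigma_{i-1})$ for $1\le i\le t$, so that $\Sigma_t=(\tilde{c},\tilde{B'})$. The claim to prove is
$$\mu_{k_1}\circ\mu_{k_2}\circ\cdots\circ\mu_{k_t}(\Sigma_t)=\Sigma_0 .$$
The key step is the single identity $\mu_{k_i}(\Sigma_i)=\Sigma_{i-1}$ for every $i$. It follows from Lemma \ref{lema1} applied to the seed $\Sigma_{i-1}$:
$$\mu_{k_i}(\Sigma_i)=\mu_{k_i}(\mu_{k_i}(\Sigma_{i-1}))=\Sigma_{i-1}.$$

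The induction is descending. We show that $\mu_{k_{j}}\circ\cdots\circ\mu_{k_t}(\Sigma_t)=\Sigma_{j-1}$ for $j=t,t-1,\ldots,1$.
\begin{itemize}
\item The base case $j=t$ is the key identity with $i=t$.
\item For the inductive step, assume the equality holds for $j$. Applying $\mu_{k_{j-1}}$ to both sides gives $\mu_{k_{j-1}}(\Sigma_{j-1})=\Sigma_{j-2}$, again by the key identity.
\end{itemize}
Taking $j=1$ gives exactly the statement. The index $k_0$ in $l$ only records where $m$ is substituted and plays no role in the mutations.

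The one point requiring care, and the main obstacle, is the substitution of $m$ for $x_{k_0}$. The transmitted seed is $\mu_{k_t}\circ\cdots\circ\mu_{k_1}(\tilde{x},\tilde{B})$ with $x_{k_0}$ replaced by $m$. To handle this, we argue that mutation formulas commute with any substitution of the initial variables. The exchange relations (\ref{eq1}) and the matrix mutation rule are the same formal operations regardless of what is plugged in for the variables, and the matrices are unaffected by the substitution. So performing the reverse mutations and then substituting yields the initial seed with $m$ in position $k_0$. This uses the strong isomorphism of the cluster algebras of $\Gamma_r$ and $\Gamma'_r$ noted before the statement, which says their exchange graphs coincide. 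We therefore formally prove the theorem for the unsubstituted seeds and then transport it along that isomorphism.
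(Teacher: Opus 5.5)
Your descending induction is correct and is the same argument as the paper's, which applies Lemma~\ref{lema1} repeatedly to collapse $\mu_{k_t}^2,\mu_{k_{t-1}}^2,\ldots,\mu_{k_1}^2$ in turn; your key identity $\mu_{k_i}(\Sigma_i)=\Sigma_{i-1}$ simply makes that telescoping explicit. Your last paragraph is unnecessary, since the statement concerns only the unsubstituted seeds, and its appeal to a strong isomorphism between the algebras of $\Gamma_r$ and $\Gamma'_r$ would in fact fail when $a_{k_0}=0$: in Example~1, $m=x_1+x_2$ is placed at position $0$, so the relabelled cluster is not algebraically independent.
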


\begin{proof}
By Lemma \ref{lema1}

\begin{equation*}
\begin{array}{rl}
\mu_{k_{1}}(\mu_{k_{2}}(\cdots \mu_{k_{t-1}}(\mu_{k_t}(\tilde{c},\tilde{B'})))) & = \mu_{k_{1}}(\mu_{k_{2}}(\cdots \mu_{k_{t-1}}(\mu^2_{k_t}(\mu_{k_{t-1}}(\cdots \mu_{k_2}(\mu_{k_1}(\tilde{x},\tilde{B}))))))) \\
& = \mu_{k_{1}}(\mu_{k_{2}}(\cdots \mu^2_{k_{t-1}}(\cdots \mu_{k_2}(\mu_{k_1}(\tilde{x},\tilde{B}))))) \\
& = \vdots \\
& = \mu_{k_{1}}(\mu^2_{k_{2}}(\mu_{k_1}(\tilde{x},\tilde{B}))) \\
& = \mu^2_{k_{1}}(\tilde{x},\tilde{B}) \\
& = (\tilde{x},\tilde{B}).
\end{array}
\end{equation*}

\end{proof}

By last, the cluster variable in the place $k_0$ is recuperated the cluster variable in the place $k_0$, and after replacing each $x_i$ by $\alpha^i,$ the message $m$ is obtained  as an element in a finite field $\mathbb{F}_q$.

\section{Examples}

In this section we present two examples to encrypt and decrypt a message using cluster algebras of finite type. First we encrypt a letter and in the second example we encrypt a message seen as a number.



\subsection*{Example 1}

For this example enumerate the letters of the alphabet with numbers, obtaining the following table.

\begin{center}
\begin{tabular}{||c|c|c|c|c|c|c|c|c|c|c|c|c||}
\hline \hline
A & B & C & D & E & F & G & H & I & J & K & L & M \\ 
\hline
1 & 2 & 3 & 4 & 5 & 6 & 7 & 8 & 9 & 10 & 11 & 12 & 13 \\ 
\hline \hline
 N & O & P & Q & R & S & T & U & V & W & X & Y & Z \\
\hline
14 & 15 & 16 & 17 & 18 & 19 & 20 & 21 & 22 & 23 & 24 & 25 & 26\\
\hline \hline 
\end{tabular}

\vspace{0.3cm}

Table 1

\end{center}

To choose $p$ and $r$, we consider a number prime $p$ and $r \in \mathbb{N}$ such that $p^r \geq 26$ which is the number of letters in the alphabet that we are going to use. In this case our choose $p=2$ and $r=5$, that is, we use the finite field $\mathbb{F}_{2^5}$. The irreducible polynomial of degree $5$ on $\mathbb{Z}_2$ that  we consider is $f(x)=x^5+x^2+1$. Since $32$ exceeds the number of letters in the alphabet, we complete the Table 1 with the following numbers ans letters (the letters can be  choosen randomly of the alphabet). 

\begin{center}
\begin{tabular}{||c|c|c|c|c||}
\hline \hline
X & Y & Z & X & Y   \\ 
\hline
27 & 28 & 29 & 30 & 31  \\ 
\hline \hline 
\end{tabular}

\vspace{0.3cm}

Tabla 2

\end{center}

The quiver we consider for the cluster algebra $\mathcal{A}_5$ with the Dynkin diagram $A_5$ is

\begin{center}
\hspace{0.5cm}
\xymatrix{
x_0 \ar[r] & x_1  & x_2 \ar[l]  \ar[r] & x_3 & x_4 \ar[l].}
\end{center}

So the exchange matrix associated is 

\begin{equation*}
\tilde{B}=\left( 
\begin{array}{ccccc}
0 & 1 & 0 & 0 & 0 \\
-1 & 0 & -1 & 0 & 0 \\
0 & 1 & 0 & 1 & 0 \\
0 & 0 & -1 & 0 & -1 \\
0 & 0 & 0 & 1 & 0
\end{array}
\right) 
\end{equation*}

\vspace{1cm}

Suppose $A$  sends the message $m$ (seen as the letter $F$) to  $B$, using the key $k=\{ 0,1,4,0,3,1 \}$.

\vspace{0.5cm}

\textbf{Encryption}

\vspace{0.5cm}

According to Table 1 the letter $F$ is assigned the number $6$. The binary representation of $F$ is $$0\cdot 2^0 + 1 \cdot 2^1 + 1\cdot 2^2,$$ so that $m=01100 \in \mathbb{F}_{2^5} = \mathbb{Z}_2[x]/\langle x^5 + x^2 + 1 \rangle$ seen as $\mathbb{Z}_2$ vector space of dimension 5.

So $m$ can represented as $x_1 + x_2$, where $x_1$ and $x_2$ are cluster variables of the initial seed $\tilde{x}=\lbrace x_0,x_1,x_2,x_3,x_4 \rbrace$. Since $k_0=0$ in the key $k$, then the label for the vertices in the quiver $A'_5$ is $\lbrace x_1 + x_2,x_1,x_2,x_3,x_4 \rbrace$.



To encrypt the message, we do the sequence of mutations $\mu_1$, $\mu_4$ , $\mu_0$, $\mu_3$, $\mu_1$. The results of the sequence mutations in $A_5$ have been obtained with Sage 7.6 (cf. \cite{Sage}) and show the following.

\begin{verbatim}
In[1]: S1=ClusterSeed(['A',5])
Q1=S1.quiver()
In[2]: ms=S1.mutation_sequence([1,4,0,3,1],return_output='var')
print ms
Out[2]:
[(x0*x2 + 1)/x1, (x3 + 1)/x4, (x0*x2 + x1 + 1)/(x0*x1), 
(x2*x4 + x3 + 1)/(x3*x4), (x1 + 1)/x0]
In[3]: print S1.mutation_sequence([1,4,0,3,1],return_output='matrix')
Out[3]:
[[ 0 -1  0  0  0]
[ 1  0  1  0  0]
[ 0 -1  0  1  0]
[ 0  0 -1  0 -1]
[ 0  0  0  1  0], [ 0 -1  0  0  0]
[ 1  0  1  0  0]
[ 0 -1  0  1  0]
[ 0  0 -1  0  1]
[ 0  0  0 -1  0], [ 0  1  0  0  0]
[-1  0  1  0  0]
[ 0 -1  0  1  0]
[ 0  0 -1  0  1]
[ 0  0  0 -1  0], [ 0  1  0  0  0]
[-1  0  1  0  0]
[ 0 -1  0 -1  1]
[ 0  0  1  0 -1]
[ 0  0 -1  1  0], [ 0 -1  1  0  0]
[ 1  0 -1  0  0]
[-1  1  0 -1  1]
[ 0  0  1  0 -1]
[ 0  0 -1  1  0]] 
In[4]: Q1.mutation_sequence([1,4,0,3,1],show_sequence=True)
Out[4]:
\end{verbatim}

\includegraphics[scale=0.5]{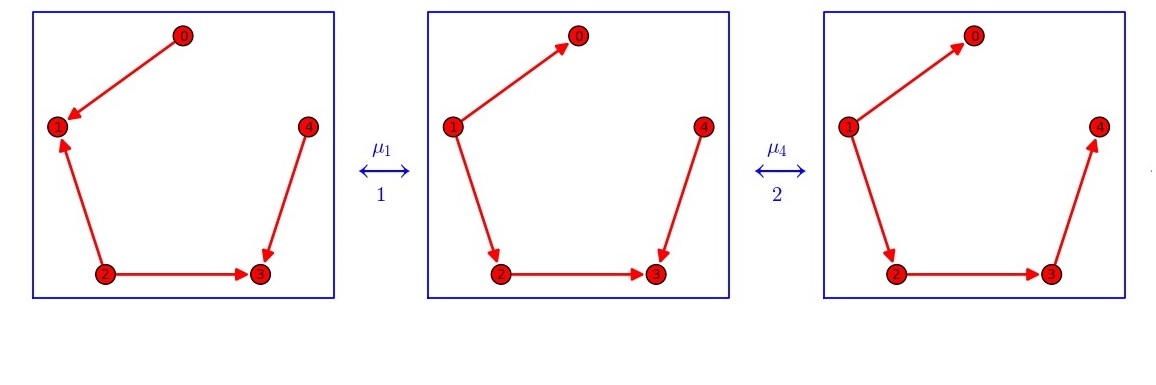} 

\includegraphics[scale=0.5]{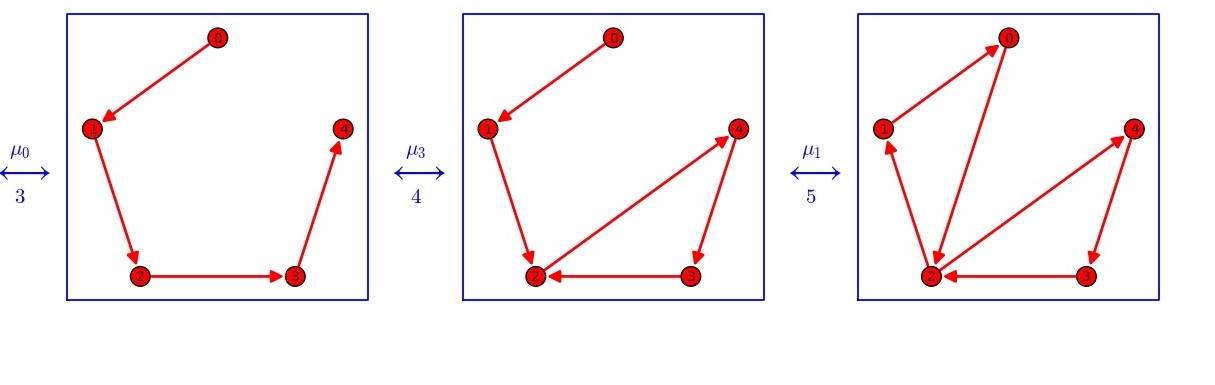} 

In Out[2] appeasr the cluster variables which change in each mutation. So the seed at end of the sequence of mutations is $(\tilde{y},\tilde{B'})$, where
$$\tilde{y}=\lbrace \dfrac{x_0 x_2 + x_1 + 1}{x_0 x_1}, \dfrac{x_1 + 1 }{x_0}, x_2, \dfrac{x_2 x_4 + x_3 + 1}{x_3 x_4}, \dfrac{x_3 + 1}{x_4} \rbrace,$$ and
\begin{equation}
\tilde{B'}=\left( 
\begin{array}{ccccc}
0 & -1 & 1 & 0 & 0 \\
1 & 0 & -1 & 0 & 0 \\
-1 & 1 & 0 & -1 & 1 \\
0 & 0 & 1 & 0 & -1 \\
0 & 0 & -1 & 1 & 0
\end{array}
\right). 
\label{eq8}
\end{equation}

Replacing $m=x_1 + x_2$ in $x_0$, we obtain

\begin{verbatim}
In [5]: [cv.subs(x0=S1.x(1)+S1.x(2),x1=S1.x(1),x2=S1.x(2),x3=S1.x(3),
x4=S1.x(4)) for cv in ms]
Out[5]:
[(x1*x2 + x2^2 + 1)/x1, (x3 + 1)/x4,
(x1*x2 + x2^2 + x1 + 1)/(x1^2 + x1*x2),(x2*x4 + x3 + 1)/(x3*x4),
(x1 + 1)/(x1 + x2)]
\end{verbatim}

We have the seed $(\tilde{c},\tilde{B'})$ in the cluster algebra $\mathcal{A}_5$ associated with the Dynkin diagram $A'_5$, where $\tilde{B'}$ is as in (\ref{eq8}) and the cluster $\tilde{c}$ is
$$\tilde{c}=\lbrace \dfrac{x_1 x_2 + x_2^2 + x_1 + 1}{x_1^2 + x_1x_2}, \dfrac{x_1 + 1 }{x_1 + x_2}, x_2, \dfrac{x_2 x_4 + x_3 + 1}{x_3 x_4}, \dfrac{x_3 + 1}{x_4} \rbrace.$$

Since $x_i$ can be seen as $\alpha^i$ by the Proposition \ref{prop1}, then

$$ \dfrac{x_1 x_2 + x_2^2 + x_1 + 1}{x_1^2 + x_1x_2} = \frac{\alpha^3 + \alpha^4 + \alpha + 1}{\alpha^2 + \alpha^3}= 1 + \alpha + \alpha^3 = 11010$$

$$\dfrac{x_1 + 1 }{x_1 + x_2} = \frac{\alpha+ 1}{\alpha + \alpha^2}= \alpha + \alpha^4 = 01001$$

$$x_2 = \alpha^2 = 00100 $$

$$\dfrac{x_2 x_4 + x_3 + 1}{x_3 x_4}= \frac{\alpha^6 + \alpha^3 + 1}{\alpha^7}=1 + \alpha + \alpha^2= 11100$$

$$\dfrac{x_3 + 1}{x_4}= \frac{\alpha^3 + 1}{\alpha^4}= 1 + \alpha^3 + \alpha^4 = 10011$$

So $$\tilde{c}= \{ 11010,01001,00100,11100,10011 \} = \{ 11,18,4,7,25\} = \{ L,S,E,H,Z \}).$$ In this way $B$ receives $(\tilde{c},\tilde{B'})= (\{ L,S,E,H,Z \}, \tilde{B'})$ and the message $m=F$ is encrypted as the letter $L$.

\vspace{0.47cm}

\textbf{Decrypt}

\vspace{0.47cm}

The decrypt key of $B$ is $l=\{ 1,3,0,4,1,0 \}$. The last number, 0, tells us the place where the encrypted message is saved. As $B$ receives 
$$(\tilde{c},\tilde{B'})= (\{ L,S,E,H,Z \}, \tilde{B'}),$$ where we see each letter of $\tilde{c}$  as an element of the finite field $ \mathbb{F}_{2^5}$, leaving 
$$\tilde{c}=\lbrace 11010,01001,00100,11100,10011 \rbrace,$$ and seen as a linear combination of elements of the cluster $\tilde{x}  = \{ x_0,x_1,x_2,x_3,x_4 \}$, we have
$$\tilde{c}=\lbrace x_0 + x_1 + x_3,x_1 + x_4,x_2,x_0 + x_1 + x_2,x_0 + x_3 + x_4 \rbrace.$$

Doing the sequence of mutations $\mu_1,\mu_3,\mu_0,\mu_4,\mu_1$ at the seed $(\tilde{x},\tilde{B'})$ in the cluster algebra associated with the matrix $\tilde{B'}$. We obtain

\begin{verbatim}
In[6]: G=matrix([[0,-1,1,0,0],[1,0,-1,0,0],[-1,1,0,-1,1],
[0,0,1,0,-1],[0,0,-1,1,0]])
S1=ClusterSeed(G)
Q1=S1.quiver()
In[7]: ms=S1.mutation_sequence([1,3,0,4,1],return_output='var')
print ms
print S1.mutation_sequence([1,3,0,4,1],return_output='matrix')
Q1.mutation_sequence([1,3,0,4,1],show_sequence=True)
Out[7]:
[(x0 + x2)/x1, (x2 + x4)/x3, (x0 + x1 + x2)/(x0*x1), 
(x2 + x3 + x4)/(x3*x4), (x1 + x2)/x0]
[[ 0  1  0  0  0]
[-1  0  1  0  0]
[ 0 -1  0 -1  1]
[ 0  0  1  0 -1]
[ 0  0 -1  1  0], [ 0  1  0  0  0]
[-1  0  1  0  0]
[ 0 -1  0  1  0]
[ 0  0 -1  0  1]
[ 0  0  0 -1  0], [ 0 -1  0  0  0]
[ 1  0  1  0  0]
[ 0 -1  0  1  0]
[ 0  0 -1  0  1]
[ 0  0  0 -1  0], [ 0 -1  0  0  0]
[ 1  0  1  0  0]
[ 0 -1  0  1  0]
[ 0  0 -1  0 -1]
[ 0  0  0  1  0], [ 0  1  0  0  0]
[-1  0 -1  0  0]
[ 0  1  0  1  0]
[ 0  0 -1  0 -1]
[ 0  0  0  1  0]]
\end{verbatim}

\includegraphics[scale=0.47]{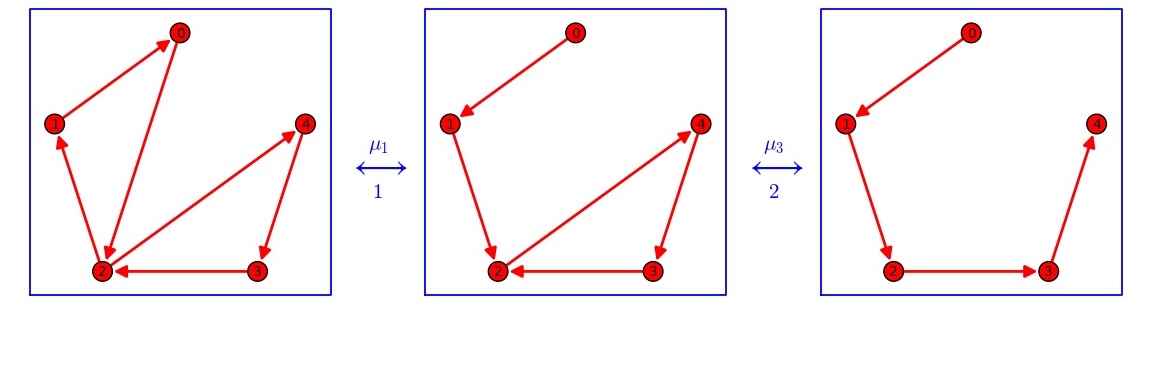}

\includegraphics[scale=0.47]{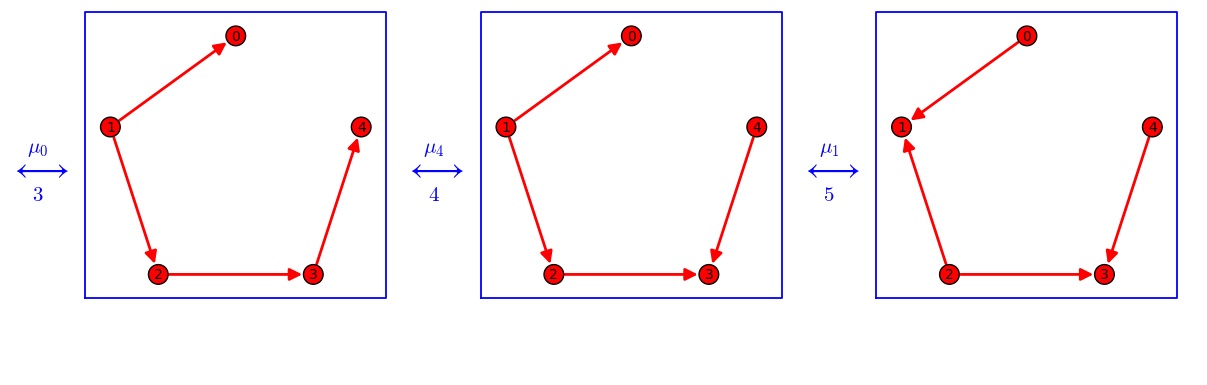}  

Replacing $x_0 = x_0 + x_1$, $x_1 = x_1 + x_4$, $x_2 = x_2$, $x_3 = x_0 + x_1 + x_2$, $x_4= x_0 + x_3 + x_4$ is obtain

\begin{verbatim}
In[8]: [cv.subs(x0=S1.x(0)+S1.x(1)+S1.x(3),x1=S1.x(1)+S1.x(4),x2=S1.x(2),
x3=S1.x(0)+S1.x(1)+S1.x(2),x4=S1.x(0)+S1.x(3)+S1.x(4)) for cv in ms]
Out[8]:
[(x0 + x1 + x2 + x3)/(x1 + x4), (x0 + x2 + x3 + x4)/(x0 + x1 + x2),
(x0 + 2*x1 + x2 + x3 + x4)/(x0*x1 + x1^2 + x1*x3 + x0*x4 + x1*x4 + x3*x4),
(2*x0 + x1 + 2*x2 + x3 + x4)/(x0^2 + x0*x1 + x0*x2 + x0*x3 + x1*x3 + 
x2*x3 + x0*x4 + x1*x4 + x2*x4), (x1 + x2 + x4)/(x0 + x1 + x3)]

\end{verbatim}

Recall that the clear text is in place 0, so we are only interested in the cluster variable in that place; that is $\dfrac{x_0 + 2x_1 + x_2 + x_3 + x_4}{x_0x_1 + x_1^2 + x_1x_3 + x_0x_4 + x_1x_4+x_3x_4}$, which are seen as elements of  $\mathbb{F}_{2^5}$ and by Proposition \ref{prop1} we have
$$\dfrac{1+2\alpha + \alpha^2 + \alpha^3 + \alpha^4}{\alpha + \alpha^2 + \alpha^4 + \alpha^4 + \alpha^5 + \alpha^7}= \alpha + \alpha^2 = 01100,$$ which corresponds to the number 6, and according to Table 1 corresponds to the letter $F$; that is, the original message that $ A $ sent to $ B $ was retrieved.

\subsection*{Example 2}

For this example we consider the message $m=38927$, $p=101$, $r=7$ and the irreducible polynomial of degree 7 on $\mathbb{Z}_{101}$ is $f(x)=46+x^2+x^3+74x^5+x^7$. So we use the finite field $\mathbb{F}_{101^7}$. Suppose $A$ is the entity who want send the message $m=38927$ at entity $B$ using the key $k=\{ 3,2,3,4,3 \}$. 

We consider the Dynkin diagramm $D_7$ and the quiver we consider for the cluster algebra $\mathcal{A}_7$ with the Dynkin diagram $D_7$ is

\begin{center}
\hspace{0.5cm}
\xymatrix{ &      &                    &     &  x_6  &    \\        
x_0 \ar[r] & x_1  & x_2 \ar[l]  \ar[r] & x_3 & x_4 \ar[l] \ar@{->}[u] \ar[r]& x_5,}
\end{center} and exchange matrix associated is

\begin{equation*}
\tilde{B}=\left( 
\begin{array}{ccccccc}
0 & 1 & 0 & 0 & 0 & 0 & 0 \\
-1 & 0 & -1 & 0 & 0 & 0 & 0\\
0 & 1 & 0 & 1 & 0 & 0 & 0\\
0 & 0 & -1 & 0 & -1 & 0 & 0\\
0 & 0 & 0 & 1 & 0 & 1 & 1 \\
0 & 0 & 0 & 0 & -1 & 0 & 0 \\
0 & 0 & 0 & 0 & -1 & 0 & 0 
\end{array}
\right) 
\end{equation*}

The representation in base 101 of $m$ is $$m=42\cdot101^0+82\cdot101^1+3\cdot101^2,$$ so that $m=( 42,82,3,0,0,0,0 ) \in \mathbb{F}_{101^7}$ seen as $\mathbb{Z}_{101}$ vector space of dimension 7. So $m = 42x_0 + 82x_1+3x_2$ is a linear combination of cluster variables in the initial seed with initial cluster $\tilde{x}=\{ x_0,x_1,x_2,x_3,x_4,x_5,x_6 \}$. Since $k_0=3$ in the key $k$, then the label for the vertices in the quiver $D'_7$ is $\{ x_0,x_1,x_2,42x_0 + 82x_1+3x_2,x_4,x_5,x_6 \}$.

To encrypt the message, do the sequence of mutations $\mu_2$, $\mu_3$ , $\mu_4$, $\mu_3$. The results of the sequence mutations in $\mathcal{A}_7$ show following.

\begin{verbatim}
In [1]:
S2=ClusterSeed(['D',7])
Q2=S2.quiver()
In [2]:
ms=S2.mutation_sequence([2,3,4,3],return_output='var')
print ms
Out[2]:
[(x1*x3 + 1)/x2, (x1*x3 + x2*x4 + 1)/(x2*x3), (x1*x3^2*x5*x6 + x3*x5*x6 
+ x1*x3 + x2*x4 + 1)/(x2*x3*x4), (x3*x5*x6 + 1)/x4]
In [3]:
print S2.mutation_sequence([2,3,4,3],return_output='matrix')
Out[3]:
[[ 0  1  0  0  0  0  0]
[-1  0  1  0  0  0  0]
[ 0 -1  0 -1  0  0  0]
[ 0  0  1  0 -1  0  0]
[ 0  0  0  1  0  1  1]
[ 0  0  0  0 -1  0  0]
[ 0  0  0  0 -1  0  0], [ 0  1  0  0  0  0  0]
[-1  0  1  0  0  0  0]
[ 0 -1  0  1 -1  0  0]
[ 0  0 -1  0  1  0  0]
[ 0  0  1 -1  0  1  1]
[ 0  0  0  0 -1  0  0]
[ 0  0  0  0 -1  0  0], [ 0  1  0  0  0  0  0]
[-1  0  1  0  0  0  0]
[ 0 -1  0  0  1  0  0]
[ 0  0  0  0 -1  1  1]
[ 0  0 -1  1  0 -1 -1]
[ 0  0  0 -1  1  0  0]
[ 0  0  0 -1  1  0  0], [ 0  1  0  0  0  0  0]
[-1  0  1  0  0  0  0]
[ 0 -1  0  0  1  0  0]
[ 0  0  0  0  1 -1 -1]
[ 0  0 -1 -1  0  0  0]
[ 0  0  0  1  0  0  0]
[ 0  0  0  1  0  0  0]]
In [4]:
Q2.mutation_sequence([2,3,4,3],show_sequence=True)
\end{verbatim}

\begin{center}
\includegraphics[scale=0.22]{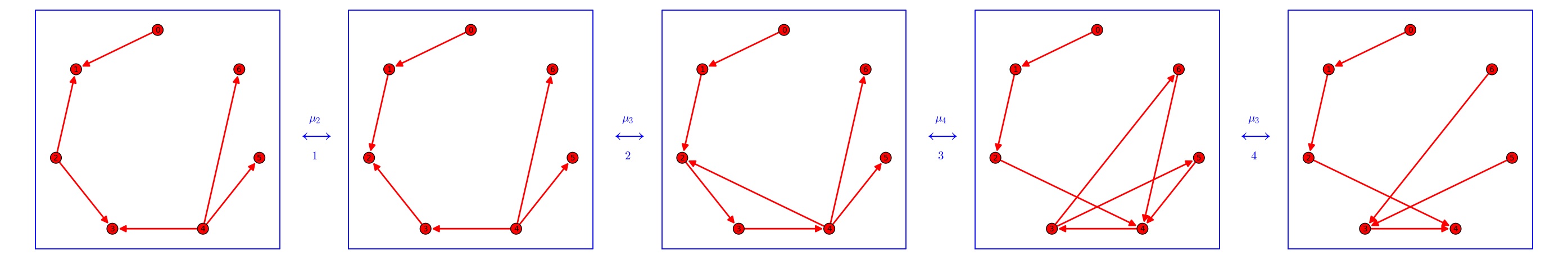} 
\end{center}

The seed at end of the sequence mutations is $(\tilde{y},\tilde{B'})$ where

$$\tilde{y}=\lbrace x_0,x_1,\frac{x_1x_3+1}{x_2},\frac{x_3x_5x_6+1}{x_4},\frac{x_1x_3^2x_5x_6+x_3x_5x_6+x_1x_3+x_2x_4+1}{x_2x_3x_4},x_5,x_6 \rbrace,$$ and

\begin{equation}
\tilde{B'}=\left( 
\begin{array}{ccccccc}
0 & 1 & 0 & 0 & 0 & 0 & 0 \\
-1 & 0 & 1 & 0 & 0 & 0 & 0\\
0 & -1 & 0 & 0 & 1 & 0 & 0\\
0 & 0 & 0 & 0 & 1 & -1 & -1\\
0 & 0 & -1 & -1 & 0 & 0 & 0 \\
0 & 0 & 0 & 1 & 0 & 0 & 0 \\
0 & 0 & 0 & 1 & 0 & 0 & 0 
\end{array}
\right).
\label{eq9}
\end{equation}

Replacing $m=42x_0 + 82x_1+3x_2$ in $x_3$, we obtain

\begin{equation*}
\begin{array}{l}
\tilde{c}=\lbrace x_0, x_1, (42x_0x_1+82x_1^2+3x_1x_2+1)/x_2, (42x_0x_5x_6+82x_1x_5x_6 + 3x_2x_5x_6+1)/x_4, \\

(1764x_0^2x_1x_5x_6+6888x_0x_1^2x_5x_6+6724x_1^3x_5x_6 +252x_0x_1x_2x_5x_6 +492x_1^2x_2x_5x_6 \\
+9x_1x_2^2x_5x_6 +42x_0x_5x_6+82x_1x_5x_6 +3x_2x_5x_6 +42x_0x_1+82x_1^2+3x_1x_2+x_2x_4+1)/ \\
(42x_0x_2x_4+82x_1x_2x_4+3x_2^2x_4), x_5, x_6 \rbrace.
\end{array} 
\end{equation*}

Since $x_i$ can be  seen as $\alpha^i$ by Proposition \ref{prop1}, then
\begin{equation*}
\begin{array}{l}
\tilde{c} =\lbrace (1,0,0,0,0,0,0), (0,1,0,0,0,0,0), (71,35,43,95,51,90,43), (18,100,62,52,82,5,12), \\(11,41,9,94,83,52,55),(0,0,0,0,0,1,0), (0,0,0,0,0,0,1)  \rbrace \\
 = \lbrace 1,101,46 596 680 922 228 , 12 799 379 480 831,58 938 867 466 645, 10 510 100 501, 1 061 520 150 601 \rbrace.
\end{array} 
\end{equation*}

So the message $m=38927$ is encrypted as the number $12 799 379 480 831$. 

When $B$ receives $(\tilde{c},\tilde{B'})$ where $\tilde{B'}$ as in \ref{eq9}, $B$ will retrieve the message using the key $l= \{ 3,4,3,2,3 \}$ and the last number 3 in $l$ indicate the place where the encrypted message is saved. Each number of $\tilde{c}$ can be seen as an element of the finite field $\mathbb{F}_{101^7}$ and later seen as linear combination of elements of the cluster $\tilde{x}= \{ x_0,x_1,x_2,x_3,x_4,x_5,x_6 \}$ we have 
\begin{equation*}
\begin{array}{l}
\tilde{c} =  \{ x_0,x_1,71x_0+35x_1+43x_2+95x_3+51x_4+90x_5+43x^6,18x_0+100x_1+62x_2+52x_3 \\
+82x_4+5x_5+12x_6,11x_0+41x_1+9x_2+94x_3+83x_4+52x_5+55x_6,x_5,x_6  \}.
\end{array} 
\end{equation*}

Doing the sequence of mutations $\mu_3,\mu_4,\mu_3,\mu_2$ at the seed $(\tilde{x},\tilde{B'})$ in the cluster algebra associated with the matrix $\tilde{B'}$. We obtain

\begin{verbatim}
In[5]:
G=matrix([[0,1,0,0,0,0,0],[-1,0,1,0,0,0,0],[0,-1,0,0,1,0,0],
[0,0,0,0,1,-1,-1],[0,0,-1,-1,0,0,0],[0,0,0,1,0,0,0],[0,0,0,1,0,0,0]])
S3=ClusterSeed(G)
Q3=S3.quiver()
In[6]:
ms=S3.mutation_sequence([3,4,3,2],return_output='var')
print ms
print S3.mutation_sequence([3,4,3,2],return_output='matrix')
Q3.mutation_sequence([3,4,3,2],show_sequence=True)
Out[6]:
[(x5*x6 + x4)/x3, (x2*x3*x5*x6 + x5*x6 + x4)/(x3*x4), (x2*x3 + 1)/x4,
 (x1*x2*x3 + x1 + x4)/(x2*x4)]
[[ 0  1  0  0  0  0  0]
[-1  0  1  0  0  0  0]
[ 0 -1  0  0  1  0  0]
[ 0  0  0  0 -1  1  1]
[ 0  0 -1  1  0 -1 -1]
[ 0  0  0 -1  1  0  0]
[ 0  0  0 -1  1  0  0], [ 0  1  0  0  0  0  0]
[-1  0  1  0  0  0  0]
[ 0 -1  0  1 -1  0  0]
[ 0  0 -1  0  1  0  0]
[ 0  0  1 -1  0  1  1]
[ 0  0  0  0 -1  0  0]
[ 0  0  0  0 -1  0  0], [ 0  1  0  0  0  0  0]
[-1  0  1  0  0  0  0]
[ 0 -1  0 -1  0  0  0]
[ 0  0  1  0 -1  0  0]
[ 0  0  0  1  0  1  1]
[ 0  0  0  0 -1  0  0]
[ 0  0  0  0 -1  0  0], [ 0  1  0  0  0  0  0]
[-1  0 -1  0  0  0  0]
[ 0  1  0  1  0  0  0]
[ 0  0 -1  0 -1  0  0]
[ 0  0  0  1  0  1  1]
[ 0  0  0  0 -1  0  0]
[ 0  0  0  0 -1  0  0]]
\end{verbatim}

\begin{center}
\includegraphics[scale=0.22]{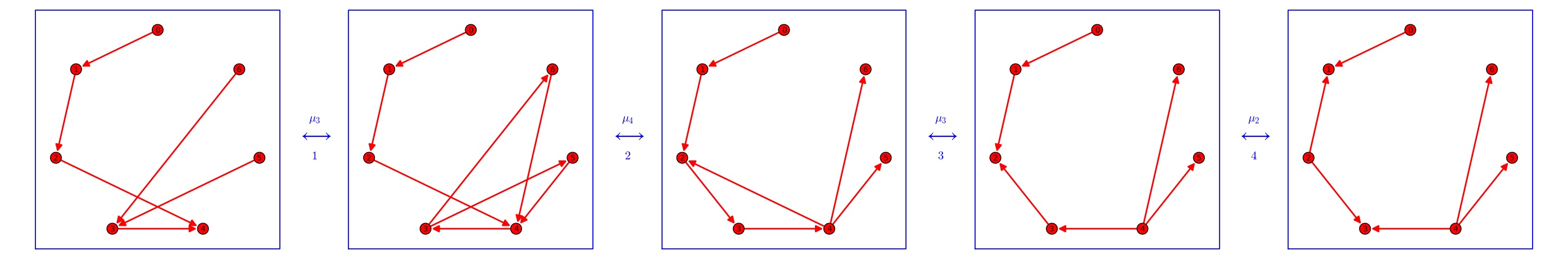} 
\end{center}

Replacing $x_0=x_0$, $x_1=x_1$, $x_2= 71x_0+35x_1+43x_2+95x_3+51x_4+90x_5 + 43x_6$, $x_3=18x_0+100x_1+62x_2+52x_3+82x_4+5x_5+12x_6$, $x_4=11x_0 + 41x_1+9x_2+94x_3+83x_4+52x_5+55x_6$, $x_5 = x_5$ and $x_6=x_6$ is obtain

\begin{verbatim}
In[7]:
[cv.subs(x0=S3.x(0),x1=S3.x(1),x2=71*S3.x(0)+35*S3.x(1)+43*S3.x(2)
+95*S3.x(3)+51*S3.x(4)+90*S3.x(5)+43*S3.x(6),x3=18*S3.x(0)+100*S3.x(1)
+62*S3.x(2)+52*S3.x(3)+82*S3.x(4)+5*S3.x(5)+12*S3.x(6),x4=11*S3.x(0)
+41*S3.x(1)+9*S3.x(2)+94*S3.x(3)+83*S3.x(4)+52*S3.x(5)+55*S3.x(6),
x5=S3.x(5),x6=S3.x(6)) for cv in ms]
Out[7]:
[(x5*x6 + 11*x0 + 41*x1 + 9*x2 + 94*x3 + 83*x4 + 52*x5 + 55*x6)/(18*x0 
+ 100*x1 + 62*x2 + 52*x3 + 82*x4 + 5*x5 + 12*x6),
(1278*x0^2*x5*x6 + 7730*x0*x1*x5*x6 + 3500*x1^2*x5*x6 + 5176*x0*x2*x5*x6 
+ 6470*x1*x2*x5*x6 + 2666*x2^2*x5*x6 + 5402*x0*x3*x5*x6 +
11320*x1*x3*x5*x6+ 8126*x2*x3*x5*x6 + 4940*x3^2*x5*x6 + 6740*x0*x4*x5*x6
+ 7970*x1*x4*x5*x6 + 6688*x2*x4*x5*x6 + 10442*x3*x4*x5*x6 + 
4182*x4^2*x5*x6 + 1975*x0*x5^2*x6 + 9175*x1*x5^2*x6 + 5795*x2*x5^2*x6 + 
5155*x3*x5^2*x6 + 7635*x4*x5^2*x6 + 450*x5^3*x6 + 1626*x0*x5*x6^2 + 
4720*x1*x5*x6^2 + 3182*x2*x5*x6^2 + 3376*x3*x5*x6^2 + 4138*x4*x5*x6^2 + 
1295*x5^2*x6^2 + 516*x5*x6^3 + x5*x6 + 11*x0 + 41*x1 + 9*x2 + 94*x3 + 
83*x4 + 52*x5 + 55*x6)/(198*x0^2 + 1838*x0*x1+ 4100*x1^2 + 844*x0*x2 + 
3442*x1*x2 + 558*x2^2 + 2264*x0*x3 + 11532*x1*x3 + 6296*x2*x3 + 4888*x3^2 
+ 2396*x0*x4 + 11662*x1*x4 + 5884*x2*x4 + 12024*x3*x4 + 6806*x4^2 + 
991*x0*x5 + 5405*x1*x5 + 3269*x2*x5 + 3174*x3*x5 + 4679*x4*x5 + 260*x5^2 
+ 1122*x0*x6 + 5992*x1*x6 + 3518*x2*x6 + 3988*x3*x6 + 5506*x4*x6 + 
899*x5*x6 + 660*x6^2),
(1278*x0^2 + 7730*x0*x1 + 3500*x1^2 + 5176*x0*x2 + 6470*x1*x2 + 2666*x2^2
+ 5402*x0*x3 + 11320*x1*x3 + 8126*x2*x3 + 4940*x3^2 + 6740*x0*x4 + 
7970*x1*x4 + 6688*x2*x4 + 10442*x3*x4 + 4182*x4^2 + 1975*x0*x5 + 
9175*x1*x5 + 5795*x2*x5 + 5155*x3*x5 + 7635*x4*x5 + 450*x5^2 + 1626*x0*x6 
+ 4720*x1*x6 + 3182*x2*x6 + 3376*x3*x6 + 4138*x4*x6 + 1295*x5*x6 + 
516*x6^2 + 1)/(11*x0 + 41*x1 + 9*x2 + 94*x3 + 83*x4 + 52*x5 + 55*x6),
(1278*x0^2*x1 + 7730*x0*x1^2 + 3500*x1^3 + 5176*x0*x1*x2 + 6470*x1^2*x2 + 
2666*x1*x2^2 + 5402*x0*x1*x3 + 11320*x1^2*x3 + 8126*x1*x2*x3 + 
4940*x1*x3^2 + 6740*x0*x1*x4 + 7970*x1^2*x4 + 6688*x1*x2*x4 + 
10442*x1*x3*x4 + 4182*x1*x4^2 + 1975*x0*x1*x5 + 9175*x1^2*x5 + 
5795*x1*x2*x5 + 5155*x1*x3*x5 + 7635*x1*x4*x5 + 450*x1*x5^2 + 
1626*x0*x1*x6 + 4720*x1^2*x6 + 3182*x1*x2*x6 + 3376*x1*x3*x6 + 
4138*x1*x4*x6 + 1295*x1*x5*x6 + 516*x1*x6^2 + 11*x0 + 42*x1 + 9*x2 + 
94*x3 + 83*x4+ 52*x5 + 55*x6)/(781*x0^2 + 3296*x0*x1 + 1435*x1^2 + 
1112*x0*x2 + 2078*x1*x2 + 387*x2^2 + 7719*x0*x3 + 7185*x1*x3 + 4897*x2*x3 
+ 8930*x3^2 + 6454*x0*x4 + 4996*x1*x4+ 4028*x2*x4 + 12679*x3*x4 + 
4233*x4^2 + 4682*x0*x5 + 5510*x1*x5 + 3046*x2*x5 +13400*x3*x5 + 
10122*x4*x5 + 4680*x5^2 + 4378*x0*x6 + 3688*x1*x6 + 2752*x2*x6 + 
9267*x3*x6 + 6374*x4*x6 + 7186*x5*x6 + 2365*x6^2)]
\end{verbatim}

Remember that the clear text is in place 3, so we are only interested in the cluster variable in this place, that is
\begin{verbatim}
(1278*x0^2 + 7730*x0*x1 + 3500*x1^2 + 5176*x0*x2 + 6470*x1*x2 + 2666*x2^2
+ 5402*x0*x3 + 11320*x1*x3 + 8126*x2*x3 + 4940*x3^2 + 6740*x0*x4 + 
7970*x1*x4 + 6688*x2*x4 + 10442*x3*x4 + 4182*x4^2 + 1975*x0*x5 + 
9175*x1*x5 + 5795*x2*x5 + 5155*x3*x5 + 7635*x4*x5 + 450*x5^2 + 1626*x0*x6 
+ 4720*x1*x6 + 3182*x2*x6 + 3376*x3*x6 + 4138*x4*x6 + 1295*x5*x6 + 
516*x6^2 + 1)/(11*x0 + 41*x1 + 9*x2 + 94*x3 + 83*x4 + 52*x5 + 55*x6)
\end{verbatim}

This cluster variable seen as element of $\mathbb{F}_{101^7}$ and by Proposition \ref{prop1} we have

\begin{equation*}
\begin{array}{rl}
\frac{47 + 84 \alpha + 16 \alpha^2 + 79 \alpha^3 + 47 \alpha^4 + 15 \alpha^5 + 67 \alpha^6}{11 + 41\alpha + 9 \alpha^2 + 94 \alpha^3 + 83 \alpha^4 + 52 \alpha^5 + 55 \alpha^6} & = 42+82\alpha+3\alpha^2 \\
                                                   & = (42,82,3,0,0,0,0) \\
                                                   & = 38927 \\
\end{array} 
\end{equation*}

Thus $B$ retrieve the original message $m$.
 
\section{Cryptosystem Security}

Recall that the encryption key is formed by a sequence of non-negative integers 
$$\{k_0,\ldots,k_t\}$$ where $k_0,\ldots,k_t$ is the sequence of mutations applied to the initial seed
$$\tilde{y}=\{ x_0,\ldots,x_{k_0-1},x_k=m,x_{k+1},\ldots,x_{r-1}\}.$$  When sending $(\tilde{c},\tilde{B}')$  to $B$, we know that this information can be made public, so it might be possible to know what type of Dinkin diagram $\Gamma_r$ associated with cluster algebra $A_r$ is being used. What is unknown is the place $k_0$ where the message is hidden along with the order of the mutations, which would be necessary to know if we want to recover the initial seed in order to recover the original message.

Since the exchange graph of a cluster algebra, according to Definition \ref{exchgraph} $A_r$, is a $r$-regular graph, in which the vertices are the seeds of the set $S$, so that any two of its vertices (seeds) are joined by an edge whenever there is a mutation from one seed to another.

Note that we use finite cluster algebras in this work, so  the seed set $S$ is finite \cite[Theorem 5.6.1]{Marsh}, and in this case, the complete list of seeds, which are equivalence classes of the mutation classes, could be known. However, knowing the path (sequence of mutations) to get from one seed to another depends on choosing the most suitable path from the total number of paths in the graph.

Recall that the exchange graph of a cluster algebra is $r$-regular, it can be represented by its adjacency matrix  $M$ of size $N_C$. To find the number of paths from one vertex to another, it can be done by calculating $M^t$ if the length $t$ of the path is known, or a DFS (depth-first search) algorithm can be used, which gives us all possible routes and enumerates them.

It is  worth mentioning that calculating $M^t$ has a complexity of $O(N_C^3\cdot \log t), O(t\cdot N_C^3), O(N_C^{2.807})$, or $O(N_C^w\cdot \log t)$, where $2\le w<3$, depending on the algorithm used. If a DFS algorithm is known, the complexity is $O(3\cdot N_C)$; note that this algorithm is useful when vertices are not repeated. In any of these methods used to determine the paths between vertices, it is necessary to know the length $t$ of the private key.

If the length $t$ of the private key is known, it is still not easy to determine the initial seed $\tilde{y}$, since although it would be certain that $\tilde{y}$ is in one of the mutation classes, it is not possible to know which one corresponds to the initial seed, since in a mutation class the seeds have a different order. For example, for the Dynkin diagram $A_3$ there are $N_c=14$ mutation classes and each class has $3!=6$ seeds. It can be verified that the following seeds are the mutation classes:

\begin{itemize}
    \item[] $C_1 = \left( \{x_0, x_1, x_2\}, \begin{pmatrix} 0 & 1 & 0 \\ -1 & 0 & 1 \\ 0 & -1 & 0 \end{pmatrix} \right)$
    \item[] $C_2 = \left( \{x_0, x_1, \frac{x_1+1}{x_2}\}, \begin{pmatrix} 0 & 1 & 0 \\ -1 & 0 & -1 \\ 0 & 1 & 0 \end{pmatrix} \right)$
    \item[] $C_3 = \left( \{x_0, \frac{x_0x_2+1}{x_1}, x_2\}, \begin{pmatrix} 0 & -1 & 0 \\ 1 & 0 & -1 \\ 0 & 1 & 0 \end{pmatrix} \right)$
    \item[] $C_4 = \left( \{\frac{x_1+1}{x_0}, x_1, x_2\}, \begin{pmatrix} 0 & -1 & 0 \\ 1 & 0 & 1 \\ 0 & -1 & 0 \end{pmatrix} \right)$
    \item[] $C_5 = \left( \{x_0, \frac{x_0x_2+x_1+1}{x_1x_2}, \frac{x_1+1}{x_2}\}, \begin{pmatrix} 0 & -1 & 1 \\ 1 & 0 & -1 \\ -1 & 1 & 0 \end{pmatrix} \right)$
    \item[] $C_6 = \left( \{\frac{x_1+1}{x_0}, x_1, \frac{x_1+1}{x_2}\}, \begin{pmatrix} 0 & -1 & 0 \\ 1 & 0 & 1 \\ 0 & -1 & 0 \end{pmatrix} \right)$
    \item[] $C_7 = \left( \{\frac{x_1+1}{x_0}, \frac{x_0x_2+x_1+1}{x_0x_1}, x_2\}, \begin{pmatrix} 0 & 1 & -1 \\ -1 & 0 & 1 \\ 1 & -1 & 0 \end{pmatrix} \right)$
    \item[] $C_8 = \left( \{x_0, \frac{x_0x_2+1}{x_1}, \frac{x_0x_2+x_1+1}{x_1x_2}\}, \begin{pmatrix} 0 & -1 & 0 \\ 1 & 0 & 1 \\ 0 & -1 & 0 \end{pmatrix} \right)$
    \item[] $C_9 = \left( \{\frac{x_0x_2+x_1+1}{x_0x_1}, \frac{x_0x_2+1}{x_1}, x_2\}, \begin{pmatrix} 0 & 1 & 0 \\ -1 & 0 & 1 \\ 0 & -1 & 0 \end{pmatrix} \right)$
    \item[] $C_{10} = \left( \{\frac{x_1+1}{x_0}, \frac{x_1^2+x_0x_2+2x_1+1}{x_0x_1x_2}, \frac{x_1+1}{x_2}\}, \begin{pmatrix} 0 & 1 & 0 \\ -1 & 0 & -1 \\ 0 & 1 & 0 \end{pmatrix} \right)$
    \item[] $C_{11} = \left( \{\frac{x_1+1}{x_0}, \frac{x_0x_2+x_1+1}{x_0x_1}, \frac{x_1^2+x_0x_2+2x_1+1}{x_0x_1x_2}\}, \begin{pmatrix} 0 & 0 & 1 \\ 0 & 0 & -1 \\ -1 & 1 & 0 \end{pmatrix} \right)$
    \item[] $C_{12} = \left( \{\frac{x_1^2+x_0x_2+2x_1+1}{x_0x_1x_2}, \frac{x_0x_2+x_1+1}{x_1x_2}, \frac{x_1+1}{x_2}\}, \begin{pmatrix} 0 & 1 & -1 \\ -1 & 0 & 0 \\ 1 & 0 & 0 \end{pmatrix} \right)$
    \item[] $C_{13} = \left( \{\frac{x_0x_2+x_1+1}{x_0x_1}, \frac{x_0x_2+1}{x_1}, \frac{x_0x_2+x_1+1}{x_1x_2}\}, \begin{pmatrix} 0 & 1 & 0 \\ -1 & 0 & -1 \\ 0 & 1 & 0 \end{pmatrix} \right)$
    \item[] $C_{14} = \left( \{\frac{x_0x_2+x_1+1}{x_1x_2}, \frac{x_0x_2+x_1+1}{x_0x_1}, \frac{x_1^2+x_0x_2+2x_1+1}{x_0x_1x_2}\}, \begin{pmatrix} 0 & 0 & -1 \\ 0 & 0 & 1 \\ 1 & -1 & 0 \end{pmatrix} \right)$
\end{itemize}

The seed $C_2' = \left( \{ \frac{x_1+1}{x_2},x_0, x_1\}, \begin{pmatrix} 0 & 0& -1 \\ 0 & 0 & 1 \\ 1& -1 & 0 \end{pmatrix} \right)$ is a mutation equivalent to $C_2$. In other words, it belongs to the same mutation class and  also $C_2'=\mu_2(\mu_1(C_8))$ and  $C_2'=\mu_1(\mu_0\mu_1\mu_2(\mu_0(\mu_1\mu_2(C_4)))))))$. Thus we can observe that in a mutation class a seed can be the result of applying sequences of mutations of different length and order.

Now, if the Dynkin diagram $\Gamma_r$ associated with the cluster algebra $A_r$ is known, the probability of choosing the mutation class where the initial seed of the cipher is found is $\frac{1}{N_C}$ and the probability of finding the initial seed in the mutation class is $\frac{1}{r!}$, remembering that the order of the cluster variables is important in the encryption and decryption of the message.

Therefore, the probability of finding the initial seed, if the Dynkin diagram $\Gamma_r$ is known, is $\frac{1}{N_C}\cdot \frac{1}{r!}=\frac{1}{N_c\cdot r!}$. We have the following
\begin{itemize}
\item If $\Gamma_r=A_r$, the probability is   $\frac{r(r+2)}{(2r+2)!}$ which tends to zero  as $r\rightarrow \infty$.
\item If $\Gamma_r=B_r$ or $\Gamma_r=C_r$, the probability is   $\frac{r!}{(2r)!}$. In both cases this probability tends to zero  as $r\rightarrow \infty$.
\item If  $\Gamma_r=D_r$, the probability is   $\frac{(r-1)!}{(3r-2)(2r-2)!}$ which tends to zero  as $r\rightarrow \infty$.

\end{itemize}

\begin{table}[h]
\centering
\begin{tabular}{|c|c|}
\hline
$\Gamma_r$ & $\frac{1}{N_c \cdot r}$ \\ \hline
$E_6$      & 1.66                     \\ \hline
$E_7$      & 4.76                     \\ \hline
$E_8$      & 9.88                     \\ \hline
$F_4$      & 3.9                      \\ \hline
$G_2$      & 0.0625                   \\ \hline
\end{tabular}
\end{table}

Therefore, it is recommended to use cluster algebras associated with Dynkin diagrams of type $A_r$, $B_r$, $C_r$ and $D_r$.


\begin{thebibliography}{ZZ}

\bibitem{grafvaluada}
V. Dlab, and C.M. Ringel. Indecomposable representations of graphs and algebras. {\itshape Mem. Amer. Math. Soc.} 6(1976), no. 173.

\bibitem{teor1}
G. Dupont An approach to non-simply laced cluster algebras. {\itshape J. Algebra320}(2008), no. 4, 1626-1661.

\bibitem{fomin1}
S. Fomin and A. Zelevinsky. Cluster algebras. I. Foundations. {\itshape J. Amer. Math. Soc.} 15 (2002), no. 2, 497-529.

\bibitem{fominfinito}
S. Fomin and A. Zelevinsky. Cluster algebras. II. Finite type classification. {\itshape Invent. Math.} 154 (2003), no. 1, 63-121.

\bibitem{algebralie}
K. Erdmann y M.J. Wildon. Introduction to Lie Algebras. Springer Undergraduate Mathematics Series. 2006.

\bibitem{Sage}
G. Musiker y C.Stump. A compendium on the cluster algebra and quiver package in Sage. {\itshape Sém. Lothar. Combin.}65 (2010/12), Art. B65d, 67pp.


\bibitem{Marsh}
R. J. Marsh.  Lecture Notes on Cluster Algebras {\itshape  European Mathematical Society}65 (2014), Art. B65d, 67pp.


\bibitem{teor2}
B. Zhu. Preprojective cluster variables of acyclic cluster algebras. {\itshape Comm. Algebra 35}(2007), no. 9, 2857-2871. 







\bibitem{FG2006}
V. Fock and A. Goncharov,
\newblock Moduli spaces of local systems and higher Teichm\"uller theory,
\newblock \emph{Publications Math\'ematiques de l'IH\'ES}, 103, 1--211, 2006.

\bibitem{Chicherin2020}
D. Chicherin, J. M. Henn, and G. Papathanasiou,
\newblock Cluster algebras for Feynman integrals,
\newblock arXiv:2012.12285, 2020. 

\bibitem{GekhtmanIzosimov2024}
M. Gekhtman and A. Izosimov,
\newblock Integrable systems and cluster algebras,
\newblock arXiv:2403.07287, 2024.

\bibitem{BrownYakimov2024}
K. Brown and M. Yakimov,
\newblock Poisson geometry and Azumaya loci of cluster algebras,
\newblock \emph{Advances in Mathematics}, 453, 109822, 2024. :contentReference[oaicite:0]{index=0}

\bibitem{Oberwolfach2024}
K. Baur, B. Marsh, R. Schiffler, and S. Schroll,
\newblock Cluster Algebras and Its Applications,
\newblock \emph{Oberwolfach Reports}, 21(1), 69--136, 2024. :contentReference[oaicite:1]{index=1}

\end{thebibliography}
\end{document}